\begin{document}

\title{Asymptotically Compatible Fractional Gr\"onwall Inequality and its Applications
}


\author{Daopeng Yin         \and
        Liquan Mei 
}


\institute{D. Yin \at
School of Mathematics and Statistics. Xi'an Jiaotong University. No.28, West Xianning Road, Xi'an, Shaanxi, 710049, P.R. China \\
           \and
           L. Mei \at
School of Mathematics and Statistics. Xi'an Jiaotong University. No.28, West Xianning Road, Xi'an, Shaanxi, 710049, P.R. China
\emph{Corresponding author}: \email{lqmei@mail.xjtu.edu.cn}
}

\date{Received: date / Accepted: date}

\maketitle

\begin{abstract}
In this work, we will give proper estimates for the discrete convolution complementary (DCC) kernels, which leads to the asymptotically compatible fractional Gr\"onwall inequality. The consequence can be applied in analysis of the stability and pointwise-in-time error of difference-type schemes on a non-uniform mesh. The pointwise error have explicit bound when a non-uniform time grid is given by specific scale function e.g. graded mesh, can be given directly.  Numerical experiments towards the conclusion of this work validate the error analysis.  
\keywords{reaction–diffusion equation \and Gr\"onwall inequality \and error analysis \and discrete convolution complementary}
 \subclass{34A08 \and 35R11 \and 35A35 \and 45L05}
\end{abstract}
\section{Introduction}
In this paper, we provide asymptotically compatible fractional Gr\"owanll inequality as a useful tool for time-fractional evolution equations. The discrete complementary convolution (DCC) kernels were first introduced in the literature \cite{Liao2018,Liao2019a} to prove a class of fractional order inequalities. Due to the lack of proper estimation for DCC kernels, there are certain flaws in their asymptotic compatibility and optimal error estimation. Its asymptotic expression is an open problem up to now \cite{Yang2022} which will be rigorously proved in this work. Consequently, we provide pointwise-in-time error estimates of the non-uniform time mesh L1-scheme for solving time-fractional equations involving non-dissipation term under general regularity assumptions, based on the mentioned above results coupled with the discrete convolution summation (DCS) estimation. In such an analysis framework, this class of methods can be extended to other time difference-type schemes as well. Compared to the construction and analysis of barrier function, this route is considerably less expensive to analyze, yet our theoretical results concur with Kopteva's results \cite{Kopteva2020a}. Our main works are listed below
\begin{itemize}
\item The continuous counterpart of DCC will be given, and their coefficient depends of mesh scale and the ratio of adjacent grids was rigorously analyzed.
\item The asymptotically compatible fractional Gr\"onwall inequality will be proved which will be applied to the analysis of stability and error for time-fractional reaction-diffusion equations.
\item In providing grid functions, such as graded mesh, the explicit expression for pointwise-in-time error estimation can be strictly written based on the sharp estimate for DCS. 
\end{itemize}
As an extension of the parabolic problems, Caputo time-fractional subdiffusion equations have a wide range of applications. In this article, the following time-fractional order reaction-diffusion equation is taken into consideration. For \( \kappa >0 \), 
\begin{subequations}\label{eq:the main equation}
\begin{align}
\partial_{t}^{\alpha}u-\Delta u = & \kappa u + f(\mathbf{x},t), & \mathbf{x} \in \Omega \times (0,T],           \\
u(\mathbf{x},0)= & u_{0}(\mathbf{x}),                           & \mathbf{x} \in \Omega,                        \\
u(\mathbf{x},t)= & \phi(\mathbf{x},t),                                   & \mathbf{x} \in \partial \Omega \times [0,T],
\end{align}
\end{subequations}
where the  $\Omega \subset \mathbb{R}^{n}$, which is a $ n $-dimensional bounded Lipschitz domain
and the  time-fractional derivative utilizes the definition of Caputo-sense
\begin{equation}\label{eq:Caputo derivative}
\partial_t^\alpha u(\mathbf{x}, t) := \frac{1}{\Gamma(1-\alpha)} \int_0^t \frac{\partial u(\mathbf{x}, s)}{\partial s} \frac{1}{(t-s)^\alpha} d s, ~ 0<\alpha<1.
\end{equation}
When $ \alpha \to 1 $, the above-mentioned problem will be degenerated into a classical parabolic equation.
Typically denote   $ \omega _{\alpha}(s):= s^{\alpha-1}/\Gamma(\alpha)$ for $s\geq0$ which also be called Gelfand-Shilov function, and the above definition has an alternative convolution form
\begin{equation*}
\partial_t^\alpha u(\mathbf{x}, t)= \int_{0}^{t} \omega _{1-\alpha}(t-s)\partial_{s} u (\mathbf{x},s) \mathrm{d}s.
\end{equation*}
\par In time discretization, the $t_{k} $ for $ 1 \leq k \leq N$ represent the time discrete nodes on general non-uniform meshes and its $k$-th time step $\tau_{k} = t_{k} - t_{k-1}$.  The temporal semi-discrete scheme of continuous \eqref{eq:the main equation} is governed as
\begin{equation}\label{eq:time_discretization}
\mathrm{D}_{\tau} ^{\alpha} u^{n-\sigma}  = \Delta u^{n-\sigma} + \kappa u^{n-\sigma} +  f(\mathbf{x},t_{n-\sigma}).
\end{equation}
The discrete Caputo operator $ \mathrm{D}_{\tau} ^{\alpha} u^{n-\sigma} := \sum_{k=1}^n a_{n-k}^{(n)} \nabla_\tau u^k $, where the difference operator $\nabla_\tau u^{k}  := u^{k} - u^{k-1}$. Here the expression of discrete convolution (DC) kernels $a_{n-k}^{(n)}$ derived from specific discretization strategy e.g.
\begin{itemize}
\item the DC kernels via L1 scheme is
\[
a_{n-k}^{(n)}  := \frac{1}{\tau_k}\int_{t_{k-1}}^{t_k}  \omega_{1-\alpha}\left(t_n-s\right)\mathrm{d} s, ~ \text{ for }  1 \leq k \leq  n.
\]
\item the DC kernels via fast L1 scheme is 
\[
a_{n-k}^{(n)}  := 
\begin{cases}
\frac{1}{\tau_{n}}\int_{t_{n-1}}^{t_{n}}\omega_{1-\alpha}(t_{n}-s) \mathrm{d}s , &\text{ for } k=n, \\
\frac{1}{\tau_{k}}\int_{t_{k-1}}^{t_{k}} \sum\limits_{l=1}^{N_{q}} \mathrm{d}s, & \text{ for } 1\le k 
\le n-1.
\end{cases}
\]
\item the DC kernels via L21-$\sigma$ scheme is 
\[
a_{n-k}^{(n)}:=\mathtt{a}_{n-k}^{(n)} + r_{k-1}\mathtt{b}_{n-k+1}^{(n)} - \mathtt{b}_{n-k}^{(n)}, \text{ for } 1 \le k \le n,
\]
where 
\begin{align*}
\mathtt{a}_{n-k}^{(n)}  := & \frac{1}{\tau_{k}}\int_{t_{k-1}}^{\min\{t_{k}, t_{n-\sigma}\}} \omega_{1-\alpha}(t_{n-\sigma}-s)\mathrm{d}s, \text{ for } 1\le k \le n-1, \\
\mathtt{b}_{n-k}^{(n)}  := & \frac{2}{\tau_{k}(\tau_{k}+\tau_{k+1})}\int_{t_{k-1}}^{t_{k}} (s-t_{k-\frac{1}{2}})\omega_{1-\alpha}(t_{n-\sigma}-s)\mathrm{d}s, \text{ for } 1 \le k \le n-1,
\end{align*}
with setting $\mathtt{b}_{0}^{(n)} = \mathtt{b}_{n}^{(n)}\equiv 0$.
\end{itemize}
\par Our analysis focuses on the temporal discrete scheme, thus the spacial discretization can couple with finite element methods (FEMs) \cite{Ciarlet2002,Thomee2006}, finite difference methods (FDMs) \cite{Thomas1995}, spectral methods (SMs) \cite{Shen2011,Trefethen2000} and so on. Since without any restriction for time scales, the time adaptive strategy can used.
\par
There are some discussions on the regularity of the solutions to the subdiffusion equations in recent literature.
For the linear subdiffusion problem, Stynes et al \cite{Stynes2017} proved that while  $u_{0}\in D(\mathcal{L}^{5/2})$ and $\|f(\cdot, t)\|_{\mathcal{L}^{5 / 2}}+\| f_t(\cdot, t)\| _{\mathcal{L}^{1 / 2}}+t^\rho\| f_{t t}(\cdot, t)\| _{\mathcal{L}^{1 / 2}} \le C$, then
$\mid \partial_{t}^{m}u(\cdot,t)\mid \lesssim 1+t^{\alpha-m}, \text{ for } m = 0, 1, 2.$
Jin et al \cite{Jin2018}  analyzed that when
$u_{0}\in H_{0}^{1}(\Omega)\cap H^{2}(\Omega)$ and $\partial_{t}^{m}f\in (0,T;L^{2}(\Omega))$
then
$\|\partial_{t}^{m}\left(u(\cdot,t) - u_{0}\right) \| _{L^{2}(\Omega)} \lesssim t^{\alpha-m},m\geq 0.$
Al-Maskari et al
\cite{AlMaskari2019} suggested that the initial data
$u_0 \in \dot{H}^\nu(\Omega)$ can prove $\| \partial_t u(t)\| _{L^2(\Omega)} \lesssim t^{\nu \alpha / 2-1}$.
This weakly singular solution can also be a more generalized form, Zhang et al \cite{Zhang2022}
gave error analysis based on  a general assumption
$u(t)-u(0)=\sum_{k=1}^m \hat{u}_k t^{\beta_k}+\tilde{u}(t) t^{\beta_{m+1}}$, where $0<\beta_1<\cdots<\beta_m<\beta_{m+1}$ and $\tilde{u}(t) \in L^2([0, T]; X)$.
To this end, we give general assumptions on the regularity of the solution
\begin{equation*}\label{eq:regularity assumption}
\| \partial_t^m u \| \lesssim 1 + t^{\beta-m}, \text{ for } m =1, 2, \text{ and } \beta \in(0,1) \cup(1,2],
\end{equation*}
where $\beta$ is considered to be the regularity parameter in the general case \cite{Li2024}. When $\beta = \alpha$, it is the most common case.
\par 
The error order of a time discrete scheme is not constant, that is, the error order on various time levels has varying error orders, due to the non-local character of the Caputo derivative. Many academics have expressed concern about this aspect.
When $f(x,t,u)=0$, J. Gracia et al \cite{Gracia2017} analyzed them  numerical scheme have error result $\tau t_n^{\alpha-1}$.
Jin et al \cite{Jin2019} show that the time error of $L_{1}$ scheme should be
$\tau t_n^{-1}$ with $u_{0}\in L^{2}(\Omega)$.
These error estimates suggest that time factor  $t_{n}$ should be included in the error boundaries of time-fractional. Currently, there are two basic categories of analytical methods that may be applied to provide a result that is more cohesive. For one thing, N. Kopteva et al provided a general theoretical framework for pointwise-in-time error estimation of time-fractional diffusion problems using the discrete comparison principle which includes   $L1, L2, L21-\sigma$ schemes \cite{Kopteva2019,Kopteva2020,Kopteva2020a}  and also semi-linear problem \cite{Kopteva2020b}. For another,  Liao et al analyzed the error of the Caputo derivative by constructing DCC kernels and developing the fractional version of the discrete Gr\"onwall inequality \cite{Liao2018,Liao2019,Liao2019}, which is a popular tool for time evolution equations.
However, the pointwise-in-time error estimation under the general regularity of the solution using the L1  discrete scheme with non-uniform time mesh is still very sparse in related work under the frame of DCC.
\par
The numerical techniques may be stated as a DCS since the Fourier transform can be used to infer that the definition of the Caputo fractional order derivative originates from the continuous convolution form. Thus, the sharp estimation of DCS is a current topic of interest for related academics \cite{Stynes2023}.
There are certain works by C. Lubich \cite{Lubich1986,Lubich1988a,Lubich1988} that are linked to the investigation of this historical question. Numerous applications of this finding may be made to the research of numerical stability and convergence in time-fractional order problems.
\par The rest of the paper is organized as follows. In section \ref{sec:ACF_Gronwall}, we will prove the asymptotical estimate for DCC and then obtain the asymptotically compatible fractional Gr\"onwall inequality. In section \ref{sec:Error_analysis}, we present some applications, e.g. the numerical stability and pointwise-in-time error of L1 scheme of time-fractional parabolic equations. Section \ref{sec:numerical_verification} will show several numerical examples to confirm the theoretical results. There are two appendices at the end of this article including appendix \ref{sec:appendix_dcs} give a sharp estimation of DCS and rigorous proof and appendix \ref{sec:appendix_necessary_condition}  prove the necessary condition $ \frac{1}{2} \mathrm{D}_{\tau}^{\alpha}\|u^{n}\|^{2} \le (\mathrm{D}_{\tau}^{\alpha}u^{n}, u^{n}) $ with discrete Caputo operator.
\section{Asymptotically Compatible Fractional Gr\"onwall Inequality}\label{sec:ACF_Gronwall}
In the first subsection of this section, we will rigorously prove the asymptotic expression of DCC. The proof in uniform grids has been given by \cite[Theorem 3.3]{Jin2017}, but for general non-uniform grids, no one has yet completed. Our proof method is relatively concise and more general. Continuing with this result, we will provide the asymptotically compatible Gr\"onwall inequalities in the second subsection and their applications in stability and error analysis of semi-discrete schemes for time-fractional parabolic equations in the next section. 
\subsection{The asymptotic analysis of the DCC}
\par In this subsection, we provide an asymptotic estimate of DCC for general non-uniform mesh and its continuous counterpart will be used to prove the asymptotically compatible fractional Gr\"onwall inequality in the next subsection.
\par Firstly, we review the definition of DCC and express it in matrix form. The DCC defined by $(n-k)$-terms recurrence relations:
\begin{equation}\label{eq:recurrence relations of DCC}
p_{n-k}^{(n)}  := \frac{1}{a_{0}^{(n)}} 
\begin{cases}
1, &\text{ if } k = n, \\
\sum\limits_{j=k+1}^{n}p_{n-j}^{(n)}\left(a_{j-k-1}^{(j)} - a_{j-k}^{(j)}\right), & \text{ if } 1\le k <n.
\end{cases}
\end{equation}
The equivalent matrices form is
\begin{equation}
\mathbf{P}_{n}\mathbf{A}_{n}\mathbf{D}_{n} = \mathbf{I}_{n}, \label{eq:DCC_matrix_form}
\end{equation}
where
\begin{equation*}
\mathbf{P}_{n}  := 
\begin{bmatrix}
p_{0}^{(1)} &  0 & \ldots & 0     \\
p_{1}^{(2)} & p_{0}^{(2)} & \ldots & 0   \\
\vdots 	& \vdots & \ddots &  \vdots    \\
p_{n-1}^{(n)} & p_{n-2}^{(n)} & \ldots  & p_{0}^{(n)}
\end{bmatrix}, 
\mathbf{A}_{n}  := 
\begin{bmatrix}
a_{0}^{(1)} &  0 & \ldots  & 0 \\
a_{1}^{(2)} & a_{0}^{(2)} & \ldots & 0\\
\vdots 	& \vdots & \ddots & \vdots  \\
a_{n-1}^{(n)} & a_{n-2}^{(n)} & \ldots & a_{0}^{(n)}
\end{bmatrix} \nonumber ,
\end{equation*}
with the difference matrices
\begin{align*}
\mathbf{D}_{n} := 
\begin{bmatrix}
1 & & & \\
-1 & 1 & & \\
 & \ddots & \ddots   \\
& & -1 & 1
\end{bmatrix}
\text{ and identity matrix } \mathbf{I}_{n}.
\end{align*}
\par For all $1\le k \le n$, the relation $\sum_{j=k}^{n} p_{n-j}^{(n)} a_{j-k}^{(j)} = 1$ constantly holds is equivalent to 
\begin{align*}
\mathbf{P}_{n}\mathbf{A}_{n} = 
\begin{bmatrix}
1 & 0 & \ldots & 0 \\
1 & 1 & \ldots & 0 \\
\vdots & \vdots & \ddots & \vdots \\ 
1 & 1 & \ldots & 1  
\end{bmatrix} = \mathbf{D}_{n}^{-1},
\end{align*}
which can obtain from \eqref{eq:DCC_matrix_form} directly.
\par Before starting the discussion, it is necessary to declare some notations in this article, the adjoint time step ratio $\rho=\max_{1\le n \le N-1}\{\frac{\tau_{n+1}}{\tau_{n}}\},$ and maximum time step scale $ \tau=\max_{1\le n \le N}\{ \tau_{n}\}$.  The asymptotic expression of DCC is 
\begin{equation}\label{key}
\tilde{p}_{n-k}^{(n)} := \int_{t_{k-1}}^{t_{k}} \omega_{\alpha}( t_{n} - s )\mathrm{d}s. \nonumber
\end{equation}
\begin{lemma}\label{lemma:quotient of similar coefficient}
For all $1 \le k+1 \le j \le n $, 
\begin{equation}
\max_{k+1\le j \le n}\left\{ \frac{\tilde{p}_{n-j}^{(n)}\left(a_{j-k-1}^{(j)} - a_{j-k}^{(j)}\right)}{-\int_{t_{j-1}}^{t_{j}}\omega_{\alpha}(t_{n} - t)\int_{t_{k-1}}^{t_{k}}\omega^{\prime}_{1-\alpha}(t-s)\mathrm{d}s\mathrm{d}t} \right\} := C_{\rho, \tau} \le \frac{\rho+1}{2} + \mathcal{O}(\tau). \nonumber
\end{equation}
\end{lemma}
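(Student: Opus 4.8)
The plan is to rewrite the quotient as the product of a \emph{mesh-ratio constant}, bounded by $\tfrac{\rho+1}{2}$, and a \emph{residual factor} equal to $1+\mathcal{O}(\tau)$. I first record the elementary facts I will use: $\omega_{1-\alpha}(x)=x^{-\alpha}/\Gamma(1-\alpha)$ is positive, decreasing and convex on $(0,\infty)$; $-\omega'_{1-\alpha}(x)=\frac{\alpha}{\Gamma(1-\alpha)}x^{-1-\alpha}$ is positive and decreasing; and $\omega''_{2-\alpha}=\omega'_{1-\alpha}$. Since $\omega_{1-\alpha}(t_j-s)$ increases as $s\uparrow t_j$ and $[t_k,t_{k+1}]$ lies closer to $t_j$ than $[t_{k-1},t_k]$, averaging gives $a^{(j)}_{j-k-1}>a^{(j)}_{j-k}$, so the numerator is positive; the denominator is positive because $\omega_\alpha(t_n-t)>0$ and $-\omega'_{1-\alpha}(t-s)>0$ for $s<t$.

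The key step is a divided-difference identity for the kernel increment. Writing $a^{(j)}_{j-k-1}=\frac{\omega_{2-\alpha}(t_j-t_k)-\omega_{2-\alpha}(t_j-t_{k+1})}{\tau_{k+1}}$ and $a^{(j)}_{j-k}=\frac{\omega_{2-\alpha}(t_j-t_{k-1})-\omega_{2-\alpha}(t_j-t_k)}{\tau_{k}}$ shows that $a^{(j)}_{j-k-1}-a^{(j)}_{j-k}$ equals $-(t_{k+1}-t_{k-1})$ times the second divided difference of $\omega_{2-\alpha}$ at the nodes $t_j-t_{k+1}<t_j-t_k<t_j-t_{k-1}$. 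The mean value theorem for divided differences (valid for $j\ge k+1$, with a routine limiting remark when $t_j=t_{k+1}$) together with $\omega''_{2-\alpha}=\omega'_{1-\alpha}$ then gives a point $\xi_{j,k}\in(t_j-t_{k+1},\,t_j-t_{k-1})$ with
\[
a^{(j)}_{j-k-1}-a^{(j)}_{j-k}=\frac{\tau_k+\tau_{k+1}}{2}\,\bigl(-\omega'_{1-\alpha}(\xi_{j,k})\bigr).
\]
This produces the constant, since $\frac{\tau_k+\tau_{k+1}}{2\tau_k}=\frac12\bigl(1+\tfrac{\tau_{k+1}}{\tau_k}\bigr)\le\frac{\rho+1}{2}$.

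Inserting $\tilde p^{(n)}_{n-j}=\int_{t_{j-1}}^{t_j}\omega_\alpha(t_n-t)\,\mathrm{d}t$ and the elementary identity $-\int_{t_{k-1}}^{t_k}\omega'_{1-\alpha}(t-s)\,\mathrm{d}s=\omega_{1-\alpha}(t-t_k)-\omega_{1-\alpha}(t-t_{k-1})=\tau_k\bigl(-\omega'_{1-\alpha}(\eta(t))\bigr)$, with $\eta(t)\in(t-t_k,\,t-t_{k-1})$ by the mean value theorem, the quotient becomes $\frac{\tau_k+\tau_{k+1}}{2\tau_k}\,R_{j,k}$ with
\[
R_{j,k}:=\frac{\bigl(-\omega'_{1-\alpha}(\xi_{j,k})\bigr)\displaystyle\int_{t_{j-1}}^{t_j}\omega_\alpha(t_n-t)\,\mathrm{d}t}{\displaystyle\int_{t_{j-1}}^{t_j}\omega_\alpha(t_n-t)\bigl(-\omega'_{1-\alpha}(\eta(t))\bigr)\,\mathrm{d}t}.
\]
So the lemma follows once $R_{j,k}\le 1+\mathcal{O}(\tau)$ is established uniformly in $k+1\le j\le n$, the bound $\frac{\tau_k+\tau_{k+1}}{2\tau_k}\le\frac{\rho+1}{2}$ then giving the claim (the resulting $\mathcal{O}(\rho\tau)$ is $\mathcal{O}(\tau)$ for bounded $\rho$).

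It remains to estimate $R_{j,k}$. When $j$ is away from the diagonal, $\xi_{j,k}$ and every $\eta(t)$ lie in an interval of length $\mathcal{O}(\tau)$ at distance of order $t_j-t_k$ from the origin, so the logarithmic derivative $-(1+\alpha)/x$ of $-\omega'_{1-\alpha}$ gives $-\omega'_{1-\alpha}(\eta(t))=\bigl(-\omega'_{1-\alpha}(\xi_{j,k})\bigr)\bigl(1+\mathcal{O}(\tau/(t_j-t_k))\bigr)$ uniformly in $t$, and inserting this into the $\omega_\alpha$-weighted average yields $R_{j,k}\le1+\mathcal{O}(\tau)$. The main obstacle is the near-diagonal range $j=k+1$ (and a few neighbours): there $t_j-t_k=\mathcal{O}(\tau)$, the arguments of $-\omega'_{1-\alpha}$ cease to be comparable, and when moreover $j=n$ the weight $\omega_\alpha(t_n-t)$ is itself singular at $t=t_n$, so the pointwise comparison breaks down. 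For these finitely many indices I would instead compute $R_{j,k}$ in closed form from the explicit $\omega$'s, using the convolution identity $\omega_\alpha*\omega_{1-\alpha}=\omega_1\equiv1$ to evaluate $\int_{t_k}^{t_{k+1}}\omega_\alpha(t_{k+1}-t)\omega_{1-\alpha}(t-t_k)\,\mathrm{d}t$ exactly and controlling the shifted convolution $\int\omega_\alpha(t_n-t)\omega_{1-\alpha}(t-t_{k-1})\,\mathrm{d}t$ by monotonicity, thereby reducing $R_{j,k}$ to an explicit function of $r_k=\tau_{k+1}/\tau_k$ and $\alpha$ that is then compared with $1+\mathcal{O}(\tau)$ via $r_k\le\rho$. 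Making this near-diagonal bookkeeping rigorous, and verifying that the admissible $\mathcal{O}(\tau)$ really absorbs the remaining gap, is the technical heart of the proof; the rest is the divided-difference identity above plus routine Taylor and monotonicity estimates.
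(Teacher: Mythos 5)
Your divided-difference reformulation is sound as far as it goes, and it is in fact a tidier way of extracting the mesh-ratio factor than what the paper does: the paper divides numerator and denominator by $\tau_j\tau_k$ and Taylor-expands each piece, $\tilde p^{(n)}_{n-j}/\tau_j=\omega_\alpha(t_n-t_j)+\mathcal{O}(\tau_j)$ and $(a^{(j)}_{j-k-1}-a^{(j)}_{j-k})/\tau_k=-\tfrac12\omega'_{1-\alpha}(t_j-t_{k+1})(r_{k+1}+1)+\mathcal{O}(\tau_k)$, and then reads off $q^{(n)}_{n-k,j}=(r_{k+1}+1)/2+\mathcal{O}(\tau)$; your mean-value identity $a^{(j)}_{j-k-1}-a^{(j)}_{j-k}=\tfrac{\tau_k+\tau_{k+1}}{2}\bigl(-\omega'_{1-\alpha}(\xi_{j,k})\bigr)$ packages the same leading behaviour without infinite series. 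So in spirit the two arguments coincide: pull out $(\tau_k+\tau_{k+1})/(2\tau_k)\le(\rho+1)/2$ and show the remaining ratio is $1+\mathcal{O}(\tau)$.

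The problem is that this last step — the uniform bound $R_{j,k}\le 1+\mathcal{O}(\tau)$ — is exactly what you do not prove. Your comparison of $-\omega'_{1-\alpha}(\xi_{j,k})$ with the $\omega_\alpha$-weighted average of $-\omega'_{1-\alpha}(\eta(t))$ yields $1+\mathcal{O}(\tau/(t_j-t_k))$, which is $\mathcal{O}(\tau)$ only when $t_j-t_k$ is bounded below by a constant; it degenerates whenever $t_j-t_k=o(1)$, and on a general non-uniform mesh (a graded mesh being the case the paper cares about) the pairs $(j,k)$ with $t_j-t_k$ small are not ``finitely many near-diagonal indices'' but a set whose size grows with $N$, since the nodes cluster near $t=0$. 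Hence the proposed remedy — computing $R_{j,k}$ in closed form for a few exceptional indices via $\omega_\alpha*\omega_{1-\alpha}\equiv 1$ — does not cover the regime where the difficulty actually lives, and that computation is in any case only announced, not carried out (likewise the singular-weight case $j=n$). Since you yourself identify this as ``the technical heart,'' the submission establishes the easy part of the lemma and defers the part that makes it nontrivial; as it stands the claimed bound $C_{\rho,\tau}\le\tfrac{\rho+1}{2}+\mathcal{O}(\tau)$ is not proved.
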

\begin{proof}
For convenience, we denote 
\begin{align*}
q_{n-k,j}^{(n)} := & \frac{\tilde{p}_{n-j}^{(n)}\left(a_{j-k-1}^{(j)} - a_{j-k}^{(j)}\right)}{-\int_{t_{j-1}}^{t_{j}}\omega_{\alpha}(t_{n} - t)\int_{t_{k-1}}^{t_{k}}\omega^{\prime}_{1-\alpha}(t-s)\mathrm{d}s\mathrm{d}t},
\end{align*}
which illustrate the similarity between $\tilde{p}_{n-j}^{(n)}(a_{j-k-1}^{(j)} - a_{j-k}^{(j)})$ and the denominator part.
Equivalently,  
\begin{align*}
q_{n-k,j}^{(n)} = \frac{\frac{1}{\tau_j}\tilde{p}_{n-j}^{(n)} \cdot \frac{1}{\tau_k}\left(a_{j-k-1}^{(j)} - a_{j-k}^{(j)}\right)}{-\frac{1}{\tau_{j}}\int_{t_{j-1}}^{t_{j}}\omega_{\alpha}(t_{n} - t)\cdot\frac{1}{\tau_{k}}\int_{t_{k-1}}^{t_{k}}\omega^{\prime}_{1-\alpha}(t-s)\mathrm{d}s\mathrm{d}t}.
\end{align*}
Notice that 
\begin{align*}
\frac{\tilde{p}_{n-j}^{(n)}}{\tau_{j}} = \int_{0}^{1}\omega_{\alpha}(t_{n}-t_{j}+\theta\tau_{j})\mathrm{d}\theta = \omega_{\alpha}(t_{n}-t_{j}) + \mathcal{O}(\tau_{j}),
\end{align*}
\begin{align*}
\frac{a_{j-k-1}^{(j)} - a_{j-k}^{(j)}}{\tau_{k}} = & \frac{1}{\tau_{k}}\int_{0}^{1} \sum_{m=1}^{\infty} \frac{\omega^{(m)}_{1-\alpha} (t_{j}-t_{k+1})}{m!}[(\theta\tau_{k+1})^{m} - (\tau_{k+1}+\theta\tau_{k})^{m}]\mathrm{d}\theta \\
= & -\frac{1}{2}\omega^{\prime}_{1-\alpha}(t_{j}-t_{k+1})(r_{k+1}+1) + \mathcal{O}(\tau_{k}),
\end{align*}
and 
\begin{align*}
& - \frac{1}{\tau_{j}}\int_{t_{j-1}}^{t_{j}}\omega_{\alpha}(t_{n}-t)\cdot \frac{1}{\tau_{k}}\int_{t_{k-1}}^{t_{k}}\omega^{\prime}_{1-\alpha}(t-s)\mathrm{d}s \mathrm{d}t \\
= & - \int_{0}^{1}\omega_{\alpha}(t_{n}-t_{j} + (1-\varphi)\tau_{j}) \int_{0}^{1}\omega^{\prime}_{1-\alpha}(t_{j} - (1-\varphi)\tau_{j}-t_{k+1}+\tau_{k+1}+\theta\tau_{k})\mathrm{d}\theta \mathrm{d}\varphi .
\end{align*}
Then
\begin{align*}
q_{n-k,j}^{(n)} = & \frac{\left(\omega_{\alpha}(t_{n}-t_{j})+\mathcal{O}(\tau_{j})\right)\cdot\left(-\frac{1}{2}\omega_{1-\alpha}^{\prime}(t_{j}-t_{k+1})(r_{k+1}+1)+\mathcal{O}(\tau_{k})\right)}{-\left(\omega_{\alpha}(t_{n}-t_{j})+\mathcal{O}(\tau_{j})\right)\cdot\left(\omega_{1-\alpha}^{\prime}(t_{j}-t_{k})+\mathcal{O}(\tau_{j})+\mathcal{O}(\tau_{k+1})+\mathcal{O}(\tau_{k})\right)} \\
= & 
\frac{r_{k+1}+1}{2} + \mathcal{O}(\tau_{j})+\mathcal{O}(\tau_{k+1})+\mathcal{O}(\tau_{k}).
\end{align*}
Therefore, $q_{n-k,j}^{(n)} =  (r_{k+1}+1)/2 + \mathcal{O}(\tau_{j}) + \mathcal{O}(\tau_{k+1}) + \mathcal{O}(\tau_{k})$ and it follows that  $q_{n-k,j} ^{(n)} \le (\rho+1)/2 + \mathcal{O}(\tau)$. 
\end{proof}
In practice, we need to bound the maximum mesh scales and maximum ratio of adjacent meshes as a prior assumption. 
This hypothesis is reasonable and intuitive, for which the sufficiently uniform and sufficiently fine mesh implies the discrete problem becomes closer to the continuous one, and at the same time, the DCC will close to its continuous counterpart.
\begin{lemma}[The asymptotic analysis of DCC]
For all $1\le k \le n$, the asymptotic expression $\tilde{p}_{n-k}^{(n)} = \int_{t_{k-1}}^{t_{k}} \omega_{\alpha}( t_{n} - s )\mathrm{d}s$ of DCC yields that 
\begin{equation}
\label{eq:the estimates of DCC kernels}
p_{n-k}^{(n)} \le \tilde{p}_{n-k}^{(n)},
\end{equation}
\end{lemma}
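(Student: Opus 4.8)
The plan is a downward induction on $k$, from $k=n$ down to $k=1$. Two preliminary observations: $\omega_{1-\alpha}$ is positive and strictly decreasing, so each L1 kernel obeys $a_m^{(j)}>0$ and $a_{m-1}^{(j)}-a_m^{(j)}>0$, and feeding this into \eqref{eq:recurrence relations of DCC} gives $p_{n-k}^{(n)}>0$ for all $k$; moreover $\tilde p_{n-k}^{(n)}>0$. The base case $k=n$ is a direct computation: $a_0^{(n)}=\tau_n^{-\alpha}/\Gamma(2-\alpha)$, so $p_0^{(n)}=1/a_0^{(n)}=\Gamma(2-\alpha)\tau_n^{\alpha}$, while $\tilde p_0^{(n)}=\int_0^{\tau_n}\omega_\alpha(u)\,du=\tau_n^{\alpha}/\Gamma(1+\alpha)$; hence $p_0^{(n)}\le\tilde p_0^{(n)}$ is equivalent to $\Gamma(1+\alpha)\Gamma(2-\alpha)\le 1$, i.e., by the reflection formula, to the elementary inequality $\pi\alpha(1-\alpha)\le\sin(\pi\alpha)$ valid on $(0,1)$.

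For the inductive step, suppose $p_{n-j}^{(n)}\le\tilde p_{n-j}^{(n)}$ for all $k<j\le n$. Reading off the last row of the matrix identity \eqref{eq:DCC_matrix_form} yields $a_0^{(k)}p_{n-k}^{(n)}=\sum_{j=k+1}^{n}p_{n-j}^{(n)}\bigl(a_{j-k-1}^{(j)}-a_{j-k}^{(j)}\bigr)$, a combination of the $p_{n-j}^{(n)}$ with positive coefficients, so the induction hypothesis reduces the bound at index $k$ to the single inequality $\sum_{j=k+1}^{n}\tilde p_{n-j}^{(n)}(a_{j-k-1}^{(j)}-a_{j-k}^{(j)})\le a_0^{(k)}\tilde p_{n-k}^{(n)}$. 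To establish it I would: (i) invoke Lemma \ref{lemma:quotient of similar coefficient} term by term to replace $\tilde p_{n-j}^{(n)}(a_{j-k-1}^{(j)}-a_{j-k}^{(j)})$ by $C_{\rho,\tau}$ times $-\int_{t_{j-1}}^{t_j}\omega_\alpha(t_n-t)\int_{t_{k-1}}^{t_k}\omega^{\prime}_{1-\alpha}(t-s)\,ds\,dt$, then sum over $j$ so that the outer integrals combine into $\int_{t_k}^{t_n}$; (ii) evaluate the resulting double integral by carrying out the inner $s$-integration and splitting $\omega_{1-\alpha}(t-t_{k-1})-\omega_{1-\alpha}(t-t_k)$, recognising two complete convolutions $\omega_\alpha*\omega_{1-\alpha}=\omega_1\equiv 1$ whose units cancel, which leaves exactly $\int_{t_{k-1}}^{t_k}\omega_\alpha(t_n-t)\,\omega_{1-\alpha}(t-t_{k-1})\,dt$; and (iii) apply the Chebyshev integral inequality on $[t_{k-1},t_k]$, where $t\mapsto\omega_\alpha(t_n-t)$ is nondecreasing and $t\mapsto\omega_{1-\alpha}(t-t_{k-1})$ is nonincreasing, to bound this last integral by $\tau_k^{-1}\bigl(\int_{t_{k-1}}^{t_k}\omega_\alpha(t_n-t)\,dt\bigr)\bigl(\int_{t_{k-1}}^{t_k}\omega_{1-\alpha}(t-t_{k-1})\,dt\bigr)=a_0^{(k)}\tilde p_{n-k}^{(n)}$. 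Altogether this gives $a_0^{(k)}p_{n-k}^{(n)}\le C_{\rho,\tau}\,a_0^{(k)}\tilde p_{n-k}^{(n)}$.

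The step I expect to be the main obstacle is the bookkeeping of the constant: the target \eqref{eq:the estimates of DCC kernels} is essentially sharp (for a quasi-uniform mesh both the convolution collapse in (ii) and the Chebyshev bound in (iii) are asymptotic equalities), so the whole argument succeeds only if the $\mathcal{O}(\tau)$ remainders and the sharp estimate $C_{\rho,\tau}\le(\rho+1)/2+\mathcal{O}(\tau)$ of Lemma \ref{lemma:quotient of similar coefficient} are tracked finely enough to absorb the factor $C_{\rho,\tau}$ under the standing hypothesis that $\rho$ and $\tau$ are bounded and small — which is precisely why that hypothesis is imposed. A secondary technical point is that the summands with $j$ near $k+1$ fall outside the asymptotic regime underlying Lemma \ref{lemma:quotient of similar coefficient} (the argument $t_j-t_{k+1}$ of the Taylor expansion there degenerates to $0$), so those few terms — notably $j=k+1$ — have to be compared directly, for instance against the one-interval identity $\int_{t_k}^{t_{k+1}}\omega_\alpha(t_{k+1}-t)\,\omega_{1-\alpha}(t-t_k)\,dt=1$.
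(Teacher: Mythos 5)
Your skeleton — downward induction, the base case $p_0^{(n)}=\Gamma(2-\alpha)\tau_n^{\alpha}\le\tau_n^{\alpha}/\Gamma(1+\alpha)=\tilde p_0^{(n)}$, the reduction of the inductive step to $\sum_{j=k+1}^{n}\tilde p_{n-j}^{(n)}\bigl(a_{j-k-1}^{(j)}-a_{j-k}^{(j)}\bigr)\le a_0^{(k)}\tilde p_{n-k}^{(n)}$, the collapse of the complete convolutions, and the Chebyshev bound — uses the same ingredients as the paper, which works from the identities \eqref{eq:key_equation_A}--\eqref{eq:key_equation_B} and the same Chebyshev step. But the point you flag as ``the main obstacle'' is a genuine gap, not bookkeeping. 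Replacing each summand via Lemma \ref{lemma:quotient of similar coefficient} yields only $a_0^{(k)}p_{n-k}^{(n)}\le C_{\rho,\tau}\,a_0^{(k)}\tilde p_{n-k}^{(n)}$, and that lemma's own computation says the termwise ratio is $\approx(r_{k+1}+1)/2$, hence $C_{\rho,\tau}\le(\rho+1)/2+\mathcal O(\tau)$ is strictly larger than $1$ on any stretch of increasing steps ($\rho>1$), which is exactly the graded-mesh case the result is meant for (e.g.\ $\tau_2/\tau_1=2^{r}-1>1$ for $r>1$). The excess $(\rho+1)/2-1$ is a fixed quantity, not an $\mathcal O(\tau)$ remainder, and since — as you yourself observe — the convolution collapse and the Chebyshev step are asymptotically tight, there is no slack elsewhere to absorb it. Worse, the conclusion of your inductive step, $p_{n-k}^{(n)}\le C_{\rho,\tau}\tilde p_{n-k}^{(n)}$, is weaker than the hypothesis you must feed back in, so the induction does not close; carrying the constant instead compounds it geometrically to $C_{\rho,\tau}^{\,n-k}$.

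This is precisely where the paper's proof takes a different route: it does not substitute the continuous counterpart term by term and sum. It keeps the unknown ratios $q_{n-j}^{(n)}=p_{n-j}^{(n)}/\tilde p_{n-j}^{(n)}$ inside the sum, uses \eqref{eq:continuous_DCC_equation} to note that the continuous weights
\begin{equation*}
w_j:=\frac{\int_{t_{j-1}}^{t_j}\omega_\alpha(t_n-t)\left[\omega_{1-\alpha}(t-t_k)-\omega_{1-\alpha}(t-t_{k-1})\right]\mathrm{d}t}{\int_{t_{k-1}}^{t_k}\omega_\alpha(t_n-t)\,\omega_{1-\alpha}(t-t_{k-1})\,\mathrm{d}t},\qquad k+1\le j\le n,
\end{equation*}
sum exactly to one, and then bounds all the $q_{n-k}^{(n)}$ simultaneously through this normalized (convex-combination-like) recursion with a discrete Gr\"onwall-type argument; the identity $\sum_j w_j\equiv1$ is the structural fact intended to neutralize $C_{\rho,\tau}$, and your term-by-term replacement discards it. (Your secondary worry is also legitimate: at $j=k+1$ the expansion in Lemma \ref{lemma:quotient of similar coefficient} around $t_j-t_{k+1}=0$ degenerates, so those terms need separate treatment.) To complete your plan you would need either a direct proof of the aggregate inequality $\sum_{j=k+1}^{n}\tilde p_{n-j}^{(n)}\bigl(a_{j-k-1}^{(j)}-a_{j-k}^{(j)}\bigr)\le a_0^{(k)}\tilde p_{n-k}^{(n)}$ without the factor $C_{\rho,\tau}$, or an argument that, like the paper's, exploits the normalization of the weights rather than a termwise constant.
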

\begin{proof}
From the construction of DCC kernels which want to illustrate the discontinuous semi-group property and the semi-group property on each right part of the following relations, we know that
\begin{subequations}
\begin{align}
\sum_{j=k}^{n} p_{n-j}^{(n)} a_{j-k}^{(j)} = & 1= \sum_{j=k}^{n} \int_{t_{j-1}}^{t_{j}} \omega_{\alpha}(t_{n}-s) \omega_{1-\alpha}(s-t_{k-1}) \mathrm{d}s, \label{eq:key_equation_A}\\
\sum_{j=k+1}^{n} p_{n-j}^{(n)} a_{j-k-1}^{(j)} = & 1= \sum_{j=k+1}^{n} \int_{t_{j-1}}^{t_{j}} \omega_{\alpha}(t_{n}-s) \omega_{1-\alpha}(s-t_{k}) \mathrm{d}s. \label{eq:key_equation_B}
\end{align}
\end{subequations}
Subtract equation \eqref{eq:key_equation_A} over \eqref{eq:key_equation_B}, then for all $k<n$ has 
\begin{subequations}
\begin{align}
0 = & p_{n-k}^{(n)}a_{0}^{(k)} - \sum_{j=k+1}^{n}p_{n-j}^{(n)}\left(a_{j-k-1}^{(j)} - a_{j-k}^{(j)}\right) \label{eq:DCC_equation} \\
= & \int_{t_{k-1}}^{t_{k}} \omega_{\alpha}(t_{n} - t)\omega_{1-\alpha}(t-t_{k-1}) \mathrm{d}t \nonumber \\ 
& - \sum_{j=k+1}^{n}\int_{t_{j-1}}^{t_{j}}\omega_{\alpha}(t_{n} - t)\left[\omega_{1-\alpha}(t-t_{k}) - \omega_{1-\alpha}(t-t_{k-1})\right]\mathrm{d}t \label{eq:continuous_DCC_equation}	\\
= & \left(q_{n-k}^{(n)}\tilde{p}_{n-k}^{(n)}\right)a_{0}^{(k)} - \sum_{j=k+1}^{n} \left(q_{n-j}^{(n)}\tilde{p}_{n-j}^{(n)}\right)\left(a_{j-k-1}^{(j)} - a_{j-k}^{(j)}\right), \label{eq:ratio_equation}
\end{align}
\end{subequations}
where
\begin{align*}
q_{n-k}^{(n)} := 
\begin{cases}
1/\tilde{p}_{0}^{(n)}a_{0}^{(n)}, &\text{ if } k = n, \\
\sum\limits_{j=k+1}^{n} \dfrac{\left(q_{n-j}^{(n)}\tilde{p}_{n-j}^{(n)}\right)\left(a_{j-k-1}^{(j)} - a_{j-k}^{(j)}\right)}{\tilde{p}_{n-k}^{(n)}a_{0}^{(k)}} , &\text{ if } k < n.
\end{cases}
\end{align*}
Each of the above three relations has its individual effect:
\begin{itemize}
\item
\eqref{eq:DCC_equation} comes from the process of constructing DCC and actually has similar semi-group properties in discrete cases
\item
\eqref{eq:continuous_DCC_equation} is the significant process of finding the continuous corresponding one of DCC.  The analysis here requires a bit of intuition to observe the properties of continuous semi-groups and discrete analogs, which have strong similarities in their constituent forms In the subsequent proof, what we need to do is to quantify the similarity.
\item
\eqref{eq:ratio_equation} can exist independently of the previous two relations, and its purpose to characterize the ratios between $p _{n-k}^{(n)}$ and $\tilde{p}_{n-k}^{(n)}$ via unknown parameters $q _{n-k}^{(n)}$ for all $1\le k \le n$. From the matrix form \eqref{eq:DCC_matrix_form}, it can be seen that because the monotonicity of $a _{n-k}^{(n)}$ indicates that the $\mathbf{A}_{n}\mathbf{D}_{n}$ matrix is an M-matrix, then it can be determined that $\mathbf{P}_{n}$ is a positive matrix. Obviously, since $\tilde{p}_{n-k}^{(n)}$ is positive through its specific expressions, and then $q _{n-k}^{(n)}$ is also a positive number. The definition of this ratio may be similar to the definition of DCC, but there are differences in specific expressions. The process is to define the value of $k$ through the term $k$ to $n$. By repeating this, all the values of $q _{n-k}^{(n)}$ can be defined.
\end{itemize}
\par Then the matrix form of equations \eqref{eq:DCC_equation} and \eqref{eq:continuous_DCC_equation} is 
\begin{equation*}
\mathbf{I}_{n} = \mathbf{P}_{n}\mathbf{A}_{n}\mathbf{D}_{n} = \left(\mathbf{Q}_{n} \circ \widetilde{\mathbf{P}}_{n} \right) \mathbf{A}_{n}\mathbf{D}_{n},
\end{equation*}
where $\mathbf{Q}_{n} \circ \widetilde{\mathbf{P}}_{n}$ is the Hadamard product of matrices and
\begin{equation}
\mathbf{Q}_{n} := 
\begin{bmatrix}
q_{0}^{(1)} & 0 & \ldots & 0 \\
q_{1}^{(2)} & q_{0}^{(2)} & \ldots & 0 \\
\vdots 	& \vdots & \ddots & \vdots  \\
q_{n-1}^{(n)} & q_{n-2}^{(n)} & \ldots & q_{0}^{(n)}
\end{bmatrix} \nonumber .
\end{equation}
 Obviously, the inverse of matrices $\mathbf{A}_{n}\mathbf{D}_{n}$ exists and $\left(\mathbf{A}_{n}\mathbf{D}_{n}\right)^{-1} = \mathbf{P}_{n}$, thus $0 < \mathbf{P}_{n} = \mathbf{Q}_{n} \circ \widetilde{\mathbf{P}}_{n}$, where the M-matrices property of $\mathbf{A}_{n}\mathbf{D}_{n}$ guarantees that  $\mathbf{P}_{n}$ is a positive  matrix.
The rest of this proof will discuss the uniform upper bound of $q_{n-k}^{(n)}$ from its definition,
\begin{align*}
q_{n-k}^{(n)} = &  \sum_{j=k+1}^{n} \frac{ \left(q_{n-j}^{(n)}\tilde{p}_{n-j}^{(n)}\right)\left(a_{j-k-1}^{(j)} - a_{j-k}^{(j)}\right)}{\tilde{p}_{n-k}^{(n)}a_{0}^{(k)}} \\
\le &  \sum_{j=k+1}^{n} \frac{ \left(q_{n-j}^{(n)}\tilde{p}_{n-j}^{(n)}\right)\left(a_{j-k-1}^{(j)} - a_{j-k}^{(j)}\right)}{\int_{t_{k-1}}^{t_{k}} \omega_{\alpha}(t_{n} - t)\omega_{1-\alpha}(t-t_{k-1}) \mathrm{d}t} \\
\le & C_{r,\tau}\sum_{j=k+1}^{n} q_{n-j}^{(n)} \frac{\int_{t_{j-1}}^{t_{j}}\omega_{\alpha}(t_{n} - t)\left[\omega_{1-\alpha}(t-t_{k}) - \omega_{1-\alpha}(t-t_{k-1})\right]\mathrm{d}t}{\int_{t_{k-1}}^{t_{k}} \omega_{\alpha}(t_{n} - t)\omega_{1-\alpha}(t-t_{k-1}) \mathrm{d}t},
\end{align*}
where   the first inequality from  the Chebyshev sorting inequality  
\begin{align*}
& \int_{t_{k-1}}^{t_{k}} \omega_{\alpha}(t_{n} - t)\omega_{1-\alpha}(t-t_{k-1}) \mathrm{d}t \\
\le & \int_{t_{k-1}}^{t_{k}} \omega_{\alpha}(t_{n} - t) \mathrm{d}t \cdot \frac{1}{\tau_{k}}\int_{t_{k-1}}^{t_{k}} \omega_{1-\alpha}(t-t_{k-1}) \mathrm{d}t = \tilde{p}_{n-k}^{(n)}a_{0}^{(k)}. 
\end{align*}
and the second one from the maximum of non-negative numbers
\begin{align*}
C_{r,\tau} := & \max_{k+1\le j \le n}\left\{ \frac{\tilde{p}_{n-j}^{(n)}\left(a_{j-k-1}^{(j)} - a_{j-k}^{(j)}\right)}{-\int_{t_{j-1}}^{t_{j}}\omega_{\alpha}(t_{n} - t)\int_{t_{k-1}}^{t_{k}}\omega^{\prime}_{1-\alpha}(t-s)\mathrm{d}s\mathrm{d}t} \right\}.
\end{align*}
\eqref{eq:continuous_DCC_equation} implies 
\begin{equation*}
 \sum_{j=k+1}^{n} \frac{\int_{t_{j-1}}^{t_{j}}\omega_{\alpha}(t_{n} - t)\left[\omega_{1-\alpha}(t-t_{k}) - \omega_{1-\alpha}(t-t_{k-1})\right]\mathrm{d}t}{\int_{t_{k-1}}^{t_{k}} \omega_{\alpha}(t_{n} - t)\omega_{1-\alpha}(t-t_{k-1}) \mathrm{d}t} \equiv 1,  
\end{equation*}
and then the discrete Gr\"onwall inequality \cite{Clark1987} with its $a_n=\varepsilon\to 0$ with loss of generality,  shows that 
\begin{align*}
q_{n-k}^{(n)} \le \varepsilon (1+C_{r,\tau})^{n-k} \le 1,
\end{align*}
where the estimation of $C_{r,\tau}$ has been analyzed in lemma \ref{lemma:quotient of similar coefficient}.
\end{proof}
\begin{remark}\label{rem:}
When  $r_{n+1}\equiv 1$ and $\tau\to 0$ for all $1\le n \le N-1$, then $q_{n-j,k}^{(n)}\to 1$ for all $1\le k+1\le j \le n \le N$. Meanwhile, 
\begin{align*}
& q_{n-k,k}^{(n)}\tilde{p}_{n-k}^{(n)}a_{0}^{(k)} - \sum_{j=k+1}^{n} q_{n-j,k}^{(n)}\tilde{p}_{n-j}^{(n)}\left(a_{j-k-1}^{(j)} - a_{j-k}^{(j)}\right) \\
\xrightarrow{~\tau\to 0 ~} ~ & \tilde{p}_{n-k}^{(n)}a_{0}^{(k)} - \sum_{j=k+1}^{n} \tilde{p}_{n-j}^{(n)}\left(a_{j-k-1}^{(j)} - a_{j-k}^{(j)}\right),
\end{align*}
namely $p_{n-k}^{(n)} \to \tilde{p}_{n-k}^{(n)}$ from entries-wise congruence of matrix.  From the point view, when 
 $q_{n-j,k}^{(n)}\to 1$,  the $q _{n-k}^{(n)}$ bounded by the convex combination of term from $k+1$ to $n$, 
and then the above inequality can be optimized.
\end{remark}
\begin{figure}[h]
\centering
\includegraphics[width=2.in]{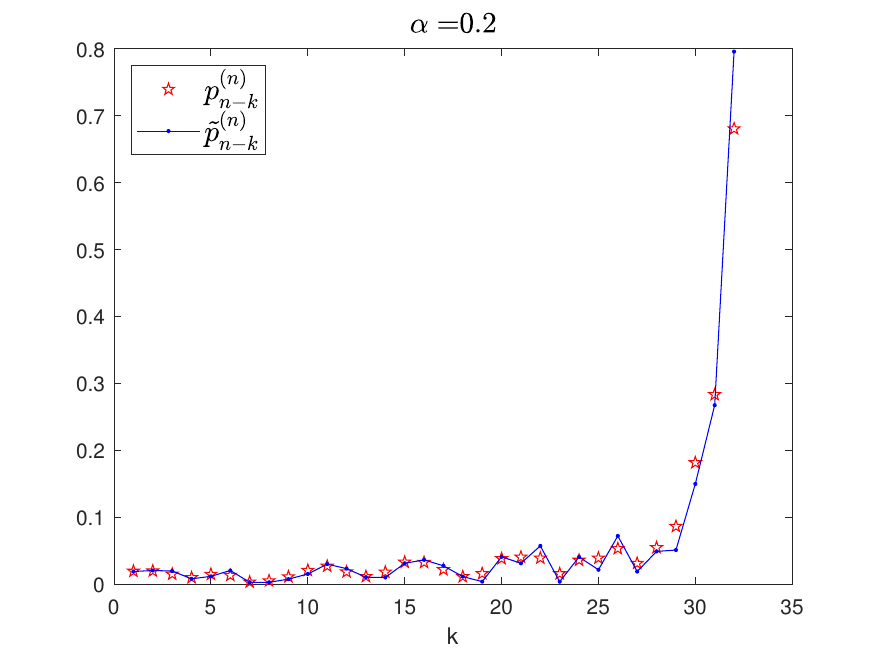}
\includegraphics[width=2.in]{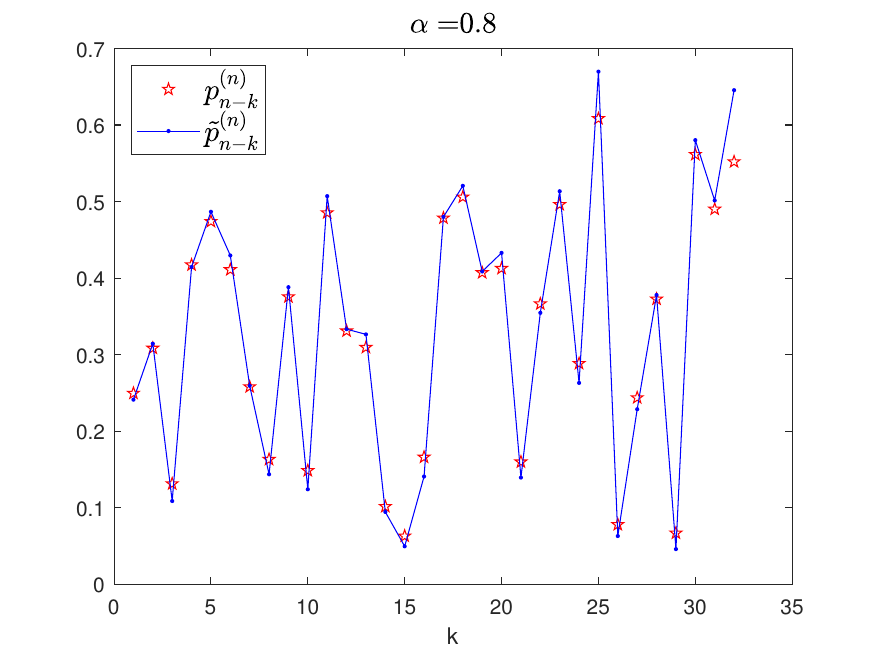}
\caption{The diagram of the $p_{n-k}^{(n)}$ and $\tilde{p}_{n-k}^{(n)}$ with fixed $n=32$}
\label{fig:DCC}
\end{figure}
\begin{remark}\label{rem:}
\par Unlike usual habits, the construction of DCC defines $k$-th term  from the larger part of indicator $n$ to $k+1$. The construction process of DCC tells us that the right-side part of each sub-figure in the figure \ref{fig:DCC} is the initial defined part, and the approximation of asymptotic estimation is not very significant in this part. The  left-side parts, since the more terms are associated, require asymptotic estimation the most.
\par In fact, the mesh ratio $r$ illustrates the degree of mesh smoothness. If time scales $\tau_k$ satisfies some proper smooth function with respect to \( k \), e.g. $\tau_k=0.4\sin(\frac{3k\pi}{n})+0.41$, the graph of $p _{n-k}^{(n)}$ was basically identical with the $\tilde{p}_{n-k}^{(n)} $ in the left sub-figure of figure \ref{fig:quotient_DCC}.  
\end{remark}
\begin{figure}[h]
\centering
\includegraphics[width=2.in]{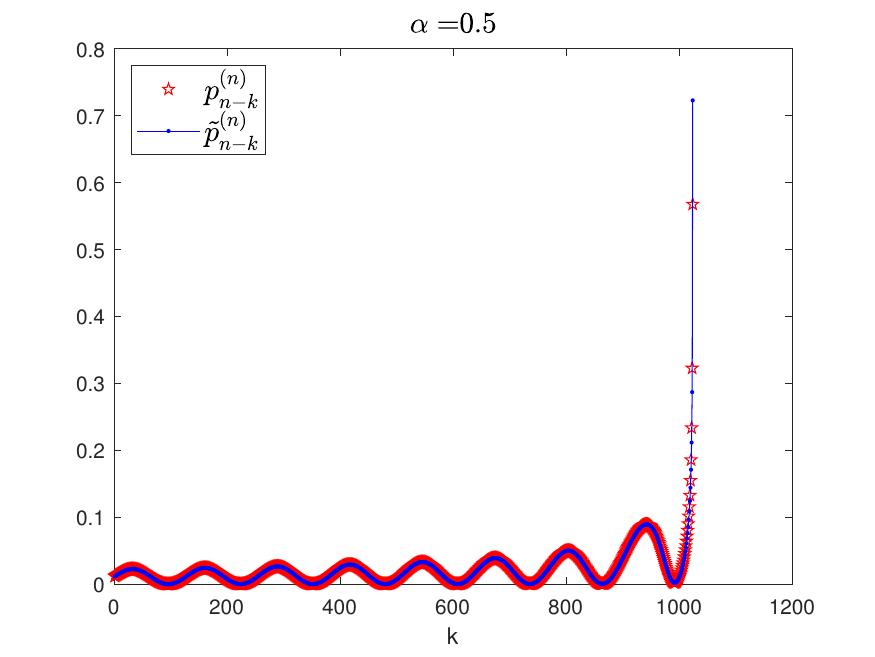}
\includegraphics[width=2.in]{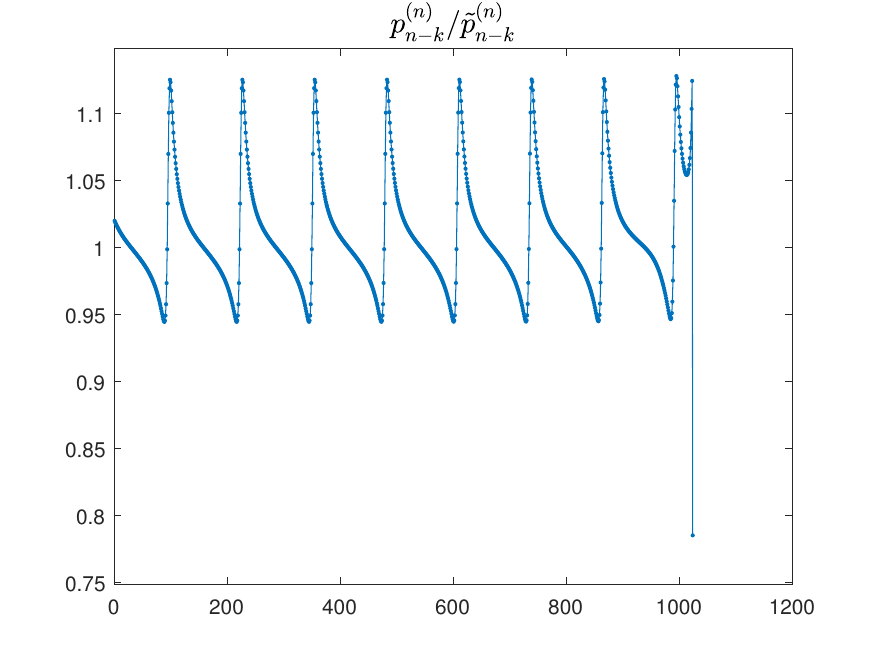}
\caption{The graphs of $p_{n-k}^{(n)}$ and $\tilde{p}_{n-k}^{(n)}$ with fixed $n=1024$.}
\label{fig:quotient_DCC}
\end{figure}
\subsection{The Asymptotically compatible fractional Gr\"onwall inequality}
\begin{theorem}
\label{thm:fractional_Gronwall_inequality_theorem}
For arbitrary positive constant $\kappa$, the non-negative sequence $\{V_{k}\}_{k=0}^{N}$ and $\{F_{k}\}_{k=0}^{N}$, 
\begin{equation}
 \mathrm{D}_{\tau}^{\alpha}V_{n} \le  \kappa V_{n} + F_{n}, \nonumber
\end{equation}
then 
\begin{equation}
V_{n} \le E_{\alpha}(\kappa t_{n}^{\alpha})\left(V_{0}+\max\limits_{1\le n \le N}\left\{\sum\limits_{j=1}^n \int_{t_{j-1}}^{t_{j}}\omega_{\alpha}(t_{n}-s)\mathrm{d}s \cdot F_{j}\right\}\right). \nonumber
\end{equation}
\end{theorem}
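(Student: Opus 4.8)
The plan is to combine the DCC asymptotic estimate from Lemma~\ref{lemma:quotient of similar coefficient} and the preceding lemma (i.e. $p_{n-k}^{(n)} \le \tilde{p}_{n-k}^{(n)} = \int_{t_{k-1}}^{t_k}\omega_\alpha(t_n-s)\,\mathrm{d}s$) with the recurrence structure of the DCC kernels. First I would act on the hypothesis $\mathrm{D}_{\tau}^{\alpha}V_n \le \kappa V_n + F_n$ by convolving with the DCC kernels: multiply the $j$-th inequality by $p_{n-j}^{(n)}$ and sum over $j=1,\dots,n$. Using the fundamental identity $\sum_{j=k}^{n} p_{n-j}^{(n)} a_{j-k}^{(j)} = 1$ (equivalently $\mathbf{P}_n\mathbf{A}_n = \mathbf{D}_n^{-1}$ from the matrix form), the left-hand side telescopes: $\sum_{j=1}^n p_{n-j}^{(n)} \mathrm{D}_\tau^\alpha V_j = \sum_{j=1}^n p_{n-j}^{(n)}\sum_{k=1}^j a_{j-k}^{(j)}\nabla_\tau V^k = \sum_{k=1}^n \nabla_\tau V^k \sum_{j=k}^n p_{n-j}^{(n)} a_{j-k}^{(j)} = \sum_{k=1}^n \nabla_\tau V^k = V_n - V_0$. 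This yields the inverted inequality
\begin{equation*}
V_n \le V_0 + \kappa\sum_{j=1}^n p_{n-j}^{(n)} V_j + \sum_{j=1}^n p_{n-j}^{(n)} F_j.
\end{equation*}

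Next I would bound the DCC-weighted forcing term using $p_{n-j}^{(n)} \le \tilde p_{n-j}^{(n)}$, so that $\sum_{j=1}^n p_{n-j}^{(n)} F_j \le \sum_{j=1}^n \tilde p_{n-j}^{(n)} F_j \le \max_{1\le m\le N}\{\sum_{j=1}^m \int_{t_{j-1}}^{t_j}\omega_\alpha(t_m-s)\,\mathrm{d}s\cdot F_j\}$, which is exactly the quantity appearing in the statement; call it $G$. So $V_n \le (V_0 + G) + \kappa\sum_{j=1}^n p_{n-j}^{(n)} V_j$. The remaining task is to solve this discrete fractional Gr\"onwall recursion and produce the Mittag--Leffler factor $E_\alpha(\kappa t_n^\alpha)$. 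The standard route is an induction on $n$: assume $V_j \le E_\alpha(\kappa t_j^\alpha)(V_0+G)$ for $j<n$, substitute, and verify the induction closes. For this one needs the key monotonicity/convexity property that $\sum_{j=1}^n \tilde p_{n-j}^{(n)} E_\alpha(\kappa t_j^\alpha) \kappa \le E_\alpha(\kappa t_n^\alpha) - 1$, i.e. the discrete convolution $\sum_j \tilde p_{n-j}^{(n)}$ against $\kappa E_\alpha(\kappa t_\cdot^\alpha)$ reproduces (a lower bound for) $E_\alpha(\kappa t_n^\alpha)-E_\alpha(0)$. Since $\tilde p_{n-j}^{(n)} = \int_{t_{j-1}}^{t_j}\omega_\alpha(t_n-s)\,\mathrm{d}s$ is an exact quadrature of the Riemann--Liouville kernel and $E_\alpha(\kappa t^\alpha)$ solves $\partial_t^\alpha E_\alpha = \kappa E_\alpha$ with $E_\alpha(0)=1$, i.e. $E_\alpha(\kappa t^\alpha) = 1 + \kappa\int_0^t \omega_\alpha(t-s)E_\alpha(\kappa s^\alpha)\,\mathrm{d}s$, this reduces to checking that the left-rectangle-type quadrature underestimates the integral — which follows from the monotonicity of $s\mapsto E_\alpha(\kappa s^\alpha)$ (increasing) combined with the fact that $\tilde p_{n-j}^{(n)}$ weights the value $V_j$ at the \emph{right} endpoint $t_j$ of each subinterval; one must track this endpoint bookkeeping carefully.

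The main obstacle I anticipate is exactly this last comparison between the discrete convolution $\sum_{j=1}^n \tilde p_{n-j}^{(n)}\,\kappa E_\alpha(\kappa t_j^\alpha)$ and the continuous integral $\kappa\int_0^{t_n}\omega_\alpha(t_n-s)E_\alpha(\kappa s^\alpha)\,\mathrm{d}s = E_\alpha(\kappa t_n^\alpha)-1$: because $\omega_\alpha(t_n-s)$ is singular and decreasing in $s$ while $E_\alpha(\kappa s^\alpha)$ is increasing, neither a pure left- nor right-endpoint rule gives a clean one-sided bound, so the argument has to be arranged so the singular decreasing factor is integrated exactly (which $\tilde p$ does) and only the smooth increasing factor $E_\alpha$ is sampled at the favorable endpoint. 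An alternative, cleaner path that avoids re-deriving this comparison is to invoke an already-established discrete fractional Gr\"onwall lemma of the form ``$\mathrm{D}_\tau^\alpha V_n \le \kappa V_n + F_n \Rightarrow V_n \le 2E_\alpha(2\kappa t_n^\alpha)(V_0 + \max_j \tilde p_{n-j}^{(n)}$-type sum$)$'' from the literature and simply note that the sharpened DCC bound $p_{n-j}^{(n)}\le\tilde p_{n-j}^{(n)}$ removes the spurious constants, giving the asymptotically compatible form stated here; I would present the self-contained induction as the primary proof and remark on this shortcut.
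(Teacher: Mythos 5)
Your first two steps coincide with the paper's own proof: convolving the hypothesis with the DCC kernels, telescoping via $\sum_{j=k}^{n} p_{n-j}^{(n)} a_{j-k}^{(j)} = 1$, and then replacing $p_{n-j}^{(n)}$ by $\tilde p_{n-j}^{(n)}$ to bound the forcing term by the stated maximum is exactly how the paper inverts the inequality. The divergence, and the genuine gap, is in the closing step. The paper at this point simply invokes the classical discrete Gr\"onwall inequality on $V_n \le V_0 + G + \kappa\sum_{j=1}^n \tilde p_{n-j}^{(n)} V_j$ (writing $G$ for the max term, as you do) to produce the Mittag--Leffler factor; you instead propose an induction whose key ingredient is the comparison $\kappa\sum_{j=1}^n \tilde p_{n-j}^{(n)} E_\alpha(\kappa t_j^\alpha) \le E_\alpha(\kappa t_n^\alpha) - 1$. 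That inequality is not available and is in fact reversed: since $s\mapsto E_\alpha(\kappa s^\alpha)$ is increasing and $\tilde p_{n-j}^{(n)} = \int_{t_{j-1}}^{t_j}\omega_\alpha(t_n - s)\,\mathrm{d}s$ multiplies the right-endpoint value $E_\alpha(\kappa t_j^\alpha) \ge E_\alpha(\kappa s^\alpha)$ for $s\in[t_{j-1},t_j]$, one gets $\kappa\sum_{j=1}^n \tilde p_{n-j}^{(n)} E_\alpha(\kappa t_j^\alpha) \ge \kappa\int_0^{t_n}\omega_\alpha(t_n-s)E_\alpha(\kappa s^\alpha)\,\mathrm{d}s = E_\alpha(\kappa t_n^\alpha) - 1$, which is the wrong direction for your induction to close. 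You flag this difficulty honestly, but the proposed remedy (``sample the smooth factor at the favorable endpoint'') is not available to you: the structure of the inverted inequality forces $V_j$, hence $E_\alpha(\kappa t_j^\alpha)$, to be evaluated at $t_j$, the unfavorable endpoint. Your fallback of importing the literature lemma with constant $2E_\alpha(2\kappa t_n^\alpha)$ proves a strictly weaker statement than the one claimed, so it does not rescue the theorem in the stated form.

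A second, smaller omission (shared, to be fair, by the paper's very terse write-up) is the implicit $j=n$ term: after inversion the right-hand side contains $\kappa p_0^{(n)} V_n$ with $p_0^{(n)} = 1/a_0^{(n)} \sim \Gamma(2-\alpha)\tau_n^{\alpha}$, so before any induction or Gr\"onwall step one must either absorb this term under a mild restriction such as $\kappa p_0^{(n)} < 1$ or appeal to a discrete Gr\"onwall lemma that handles the implicit case; your sketch passes over this. In short, the inversion and the DCC estimate are used exactly as in the paper, but the final Mittag--Leffler step as you set it up does not go through and needs a different closing argument (the paper delegates it to a classical discrete Gr\"onwall inequality rather than your Volterra-equation comparison).
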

\begin{proof}
Adjust the index $n$ as $j$ of original inequality and take $\Sigma_{j=1}^n p_{n-j}^{(n)}$ on both sides,
\begin{equation*}
\sum_{j=1}^n p_{n-j}^{(n)} \sum_{k=1}^j a_{j-k}^{(j)} \nabla_\tau V_{k} \leq \kappa\sum_{j=1}^{n} p_{n-j}^{(n)}V_{j} +
\sum_{j=1}^n p_{n-j}^{(n)}F_{j},
\end{equation*}
then 
\begin{align*}
V_{n} \le & V_{0} +  \kappa \sum_{j=1}^n p_{n-j}^{(j)}V_{j} +  \sum_{j=1}^n p_{n-j}^{(n)}F_{j} \le V_{0} + \kappa \sum_{j=1}^n \tilde{p}_{n-j}^{(j)}V_{j} + \sum_{j=1}^n \tilde{p}_{n-j}^{(n)}F_{j}.
\end{align*} 
The classical Gr\"onwall inequality yields that 
\begin{equation}
V_{n} \le E_{\alpha}(\kappa t_{n-1}^{\alpha})\left(V_{0}+\max\limits_{1\le \nu \le n}\left\{\sum\limits_{j=1}^\nu \int_{t_{j-1}}^{t_{j}}\omega_{\alpha}(t_{n}-s)\mathrm{d}s \cdot F_{j}\right\}\right), \nonumber
\end{equation}
This completes the proof.
\end{proof}
\begin{corollary}
Comparing with classical Gr\"onwall inequality that
\begin{align*}
\int_{t_{j-1}}^{t_{j}}\omega_{\alpha}(t_{n}-s)\mathrm{d}s \xrightarrow{\alpha \to 1 } \tau_{j},
\end{align*}
the theorem \ref{thm:fractional_Gronwall_inequality_theorem} gives an asymptotically compatible result that
\begin{align*}
& 
\begin{cases}
\text{ If } \mathrm{D}_{\tau}^{\alpha}V_{n} \le  \kappa V_{n}+F_{n}, \text{ then } \\
V_{n} \le E_{\alpha}(\kappa t_{n-1}^{\alpha})\left(V_{0} + \max\limits_{1\le \nu \le n}\left\{\sum\limits_{j=1}^\nu \int_{t_{j-1}}^{t_{j}}\omega_{\alpha}(t_{n}-s)\mathrm{d}s \cdot F_{j}\right\}\right),
\end{cases} \\
\xrightarrow{\alpha \to 1 } & 
\begin{cases}
\text{ If } \delta_{\tau} V_{n} \le \kappa V_{n} + F_{n}, \text{ then } \\
V_{n} \le \exp(\kappa t_{n-1})\left(V_{0}+\sum\limits_{j=1}^n \tau_{j}F_{j}\right).
\end{cases}
\end{align*}
\end{corollary}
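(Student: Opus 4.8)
The final statement (the corollary) is purely an asymptotic assertion: with the mesh $0=t_0<\cdots<t_N=T$ held fixed, every ingredient of the bound in Theorem~\ref{thm:fractional_Gronwall_inequality_theorem} converges, as $\alpha\to 1^-$, to the corresponding ingredient of the classical discrete Gr\"onwall inequality, and one then substitutes these limits into the already-established conclusion. The plan is to verify four elementary limits. \emph{(i) The discrete operator.} Taking $\mathrm{D}_\tau^\alpha$ to be the L1 operator, the closed form $a_0^{(n)}=\tau_n^{-\alpha}/\Gamma(2-\alpha)$ tends to $\tau_n^{-1}$, while for $k<n$ one has $t_n-s\ge\tau_n>0$ on $[t_{k-1},t_k]$ whereas $1/\Gamma(1-\alpha)\to 0$, so $a_{n-k}^{(n)}\to 0$; since $\mathrm{D}_\tau^\alpha V_n=\sum_{k=1}^n a_{n-k}^{(n)}\nabla_\tau V_k$ is a finite sum, it converges to $\tau_n^{-1}\nabla_\tau V_n=\delta_\tau V_n$, so the hypothesis degenerates to $\delta_\tau V_n\le\kappa V_n+F_n$. \emph{(ii) The weight.} For $j<n$, $\omega_\alpha(t_n-s)=(t_n-s)^{\alpha-1}/\Gamma(\alpha)\to 1$ uniformly on $[t_{j-1},t_j]$, and for $j=n$ the explicit value $\int_{t_{n-1}}^{t_n}\omega_\alpha(t_n-s)\,\mathrm{d}s=\tau_n^\alpha/\Gamma(\alpha+1)\to\tau_n$ handles the integrable singularity; hence the displayed limit $\int_{t_{j-1}}^{t_j}\omega_\alpha(t_n-s)\,\mathrm{d}s\to\tau_j$.

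\emph{(iii) The Mittag-Leffler factor.} Writing $E_\alpha(z)=\sum_{m\ge 0}z^m/\Gamma(\alpha m+1)$, I would fix $\alpha_0\in(0,1)$ and note that for $\alpha\in[\alpha_0,1]$ and $m$ large enough that $\alpha_0 m+1$ lies in the increasing range of $\Gamma$ one has $\Gamma(\alpha m+1)\ge\Gamma(\alpha_0 m+1)$, so the series is dominated termwise by that of the convergent $E_{\alpha_0}(|z|)$; the dominated convergence theorem for sums then gives $E_\alpha(z)\to\sum_m z^m/m!=\exp(z)$, and the same majorant applied to $E_\alpha'$ makes this limit locally Lipschitz-uniform in $\alpha$, so with $t_{n-1}^\alpha\to t_{n-1}$ we obtain $E_\alpha(\kappa t_{n-1}^\alpha)\to\exp(\kappa t_{n-1})$. \emph{(iv) The forcing term.} Since $\{F_j\}$ is non-negative and each weight is positive, the partial sums $\sum_{j=1}^\nu\int_{t_{j-1}}^{t_j}\omega_\alpha(t_n-s)\,\mathrm{d}s\,F_j$ are non-decreasing in $\nu$, so the inner maximum is attained at $\nu=n$ and, by (ii), tends to $\sum_{j=1}^n\tau_j F_j$. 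Substituting (i)--(iv) into the conclusion of Theorem~\ref{thm:fractional_Gronwall_inequality_theorem} yields $V_n\le\exp(\kappa t_{n-1})\bigl(V_0+\sum_{j=1}^n\tau_j F_j\bigr)$, which is the classical discrete Gr\"onwall inequality and hence the claim.

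The only step that is not a single line of calculus is (iii): the obstacle is producing a summable majorant that is uniform for $\alpha$ near $1$, so that the limit may legitimately be passed inside the Mittag-Leffler series. I would resolve this as sketched — via the lower bound $\alpha\ge\alpha_0>0$ together with monotonicity of $\Gamma$ on $[x^\ast,\infty)$ — or alternatively by using continuity of $\alpha\mapsto E_\alpha(z)$ through its Hankel integral representation. Steps (i), (ii) and (iv) involve only explicit closed-form quantities and a finite sum on a fixed grid, so they present no genuine difficulty.
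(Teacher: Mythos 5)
Your proposal is correct, and it follows the route the paper itself intends: the corollary is stated without proof as an immediate consequence of Theorem~\ref{thm:fractional_Gronwall_inequality_theorem}, resting precisely on the elementary limits you verify (L1 kernels collapsing to $\tau_n^{-1}\nabla_\tau$, the weights tending to $\tau_j$, $E_\alpha\to\exp$, and the maximum being attained at $\nu=n$ by non-negativity of $F_j$). Your dominated-convergence treatment of the Mittag-Leffler series simply supplies details the paper leaves implicit, so there is nothing to object to.
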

\section{Error analysis}\label{sec:Error_analysis}
In this section, we will provide applications of Gr\"onwall inequalities in numerical analysis.  Since FEMs, Galerkin SMs, and energy estimation for continuous problems, they are all analyzed under the $ L^2 $-norm. Therefore, in this section, we mainly consider estimation and analysis under the $ L^2 $-norm or discrete $ L^{2} $-norm.
\begin{lemma}[$L^2$-stability]\label{thm:L2_stability_2}
The temporal semi-discretization \eqref{eq:time_discretization} have 
\begin{equation}\label{eq:L^2-stability 2}
\left\| u^{n} \right\| \le E_{\alpha}(\kappa Ct_{n-1}^{\alpha})\left(\left\| u^{0} \right\| + C \max\limits_{1\le \nu \le n}\left\{\sum\limits_{j=1}^\nu \int_{t_{j-1}}^{t_{j}}\omega_{\alpha}(t_{n}-s)\mathrm{d}s \cdot \| f(\mathbf{x},t_{j}) \| \right\}\right). \nonumber
\end{equation}
\end{lemma}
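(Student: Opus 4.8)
The plan is to reduce the stability bound to a one-line application of the asymptotically compatible fractional Gr\"onwall inequality (Theorem~\ref{thm:fractional_Gronwall_inequality_theorem}) with $V_n=\|u^n\|$ and $F_n$ a constant multiple of $\|f(\mathbf{x},t_n)\|$; the only genuine work is producing, from the scheme, a clean first-power inequality of the type $\mathrm{D}_\tau^\alpha\|u^n\|\le \kappa C\,\|u^n\|+C\,\|f(\mathbf{x},t_n)\|$.

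\emph{Step 1 (energy testing).} I would pair \eqref{eq:time_discretization} in $L^2(\Omega)$ with the natural test function --- $u^n$ in the L1 case $\sigma=0$, and the corresponding weighted average $u^{n-\sigma}$ for the $\sigma$-type schemes --- after reducing to homogeneous Dirichlet data (the boundary datum $\phi$ being either taken as $0$ or subtracted off via a lifting that is lumped into $f$, which is one source of the generic constant $C$). Green's identity gives $(\Delta u^{n-\sigma},u^{n-\sigma})=-\|\nabla u^{n-\sigma}\|^2\le 0$, the reaction term contributes exactly $\kappa\|u^{n-\sigma}\|^2$, and Cauchy--Schwarz bounds the forcing by $\|f(\mathbf{x},t_{n-\sigma})\|\,\|u^{n-\sigma}\|$; hence $(\mathrm{D}_\tau^\alpha u^n,u^{n-\sigma})\le \kappa\|u^{n-\sigma}\|^2+\|f(\mathbf{x},t_{n-\sigma})\|\,\|u^{n-\sigma}\|$.

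\emph{Step 2 (removing the square).} Next I invoke the discrete fractional chain-rule inequality of Appendix~\ref{sec:appendix_necessary_condition}, used in the sharp form $\|u^n\|\,\mathrm{D}_\tau^\alpha\|u^n\|\le(\mathrm{D}_\tau^\alpha u^n,u^n)$; this is the Abel-summation argument that exploits positivity and monotonicity of the kernels $a_{n-k}^{(n)}$, and for the $\sigma$-schemes one uses the appropriate weighted analogue against $u^{n-\sigma}$. Combining with Step 1 and dividing by $\|u^n\|$ whenever it is positive yields $\mathrm{D}_\tau^\alpha\|u^n\|\le \kappa C\,\|u^n\|+C\,\|f(\mathbf{x},t_n)\|$; when $\|u^n\|=0$ the same inequality holds trivially because its left-hand side is then nonpositive, and if one prefers a cleaner argument one may instead work with $\sqrt{\|u^n\|^2+\varepsilon}$ and let $\varepsilon\downarrow 0$. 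I would stress that the weaker statement $\tfrac12\mathrm{D}_\tau^\alpha\|u^n\|^2\le(\mathrm{D}_\tau^\alpha u^n,u^n)$ by itself is \emph{not} sufficient here --- it only produces a bound in terms of $\|f\|^2$ --- which is precisely why the sharp, un-squared version (and the monotonicity of the $a_{n-k}^{(n)}$) is essential.

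\emph{Step 3 (Gr\"onwall).} Finally I apply Theorem~\ref{thm:fractional_Gronwall_inequality_theorem} with $V_n=\|u^n\|$, $F_n=C\,\|f(\mathbf{x},t_n)\|$, and $\kappa$ replaced by $\kappa C$; since the estimate $p_{n-k}^{(n)}\le\tilde p_{n-k}^{(n)}=\int_{t_{k-1}}^{t_k}\omega_\alpha(t_n-s)\,\mathrm ds$ has already been established, this delivers exactly the asserted bound with the Mittag--Leffler prefactor $E_\alpha(\kappa C t_{n-1}^\alpha)$ and the DCC-type weights. I expect Step 2 to be the main obstacle: extracting the first-power inequality uniformly in $n$, dealing cleanly with the degenerate case $\|u^n\|=0$, and --- for $\sigma\ne 0$ --- establishing the weighted chain-rule inequality against $u^{n-\sigma}$ rather than $u^n$. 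Steps 1 and 3 are routine bookkeeping once the earlier DCC estimate and the Gr\"onwall inequality are in hand.
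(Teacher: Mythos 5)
Your proposal is correct and follows essentially the same route as the paper, whose entire proof is ``directly use Theorem~\ref{thm:fractional_Gronwall_inequality_theorem} and the monotonicity of the discrete convolution kernel'': energy testing, the un-squared kernel inequality $\|u^n\|\,\mathrm{D}_\tau^\alpha\|u^n\|\le(\mathrm{D}_\tau^\alpha u^n,u^n)$ obtained from monotonicity via Abel summation and Cauchy--Schwarz, and then the asymptotically compatible Gr\"onwall inequality with $V_n=\|u^n\|$, $F_n=C\|f(\cdot,t_n)\|$. The only slip is attributing that sharp un-squared form to Appendix~\ref{sec:appendix_necessary_condition}, which proves only $\tfrac12\mathrm{D}_\tau^\alpha\|u^n\|^2\le(\mathrm{D}_\tau^\alpha u^n,u^n)$; but, as you yourself note, the un-squared version follows from the same monotonicity/Abel-summation argument, and your observation that the squared form alone would only give a bound in $\|f\|^2$ is exactly the reason the first-power inequality is the one needed here.
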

\begin{proof}
Directly use theorem \ref{thm:fractional_Gronwall_inequality_theorem} and the monotonicity of discrete convolution kernel, this proof can be done. 
\end{proof}
Denote \( u_h(t_n)  \) the spacial semi-discretization solution at time \( t_n \) for all  $n \in \{ 1,2,\ldots,N\}$, and the truncation error derived from temporal discrete \( 	\left| \mathrm{D}_\tau^\alpha u_h^n - \partial_t^\alpha u_h(t_n) \right| = | \sum_{k=1}^{n} T_{n-k}^{(n)} | =: |  \mathtt{T}_{\tau}^{n} | \) with
\begin{equation}
T_{n-k}^{(n)} := \int_{t_k-1}^{t_{k}}\omega_{1-\alpha}\left(t_n-s\right)\left[\frac{\nabla_{\tau}u_{m}^{k}}{\tau_{k}}-\frac{\partial u}{\partial s}\left(x_m, s\right)\right] d s.  \nonumber
\end{equation}
\begin{lemma}[{Lemma 3.1} \cite{Liao2019}]
\label{lem:truncated_error}
Under the assumption $	\| \partial_t^m u \| \lesssim 1 + t^{\beta-m}, \text{ for } m =1, 2, \text{ and } \beta \in(0,1) \cup(1,2]$,	the truncated error on  general non-uniform meshes gives
\begin{equation}\label{eq:truncated error on non-uniform meshes}
|\mathtt{T}_{\tau}^{n}| \leq	a_0^{(n)} G^n + \sum_{k=1}^{n-1}\left(a_{n-k-1}^{(n)}-a_{n-k}^{(n)}\right) G^k, \quad n \geq 1.
\end{equation}
where $G^k :=2 \int_{t_{k-1}}^{t_k}\left(t-t_{k-1}\right)\left|u^{\prime \prime}(t)\right| \mathrm{d} t$. Since the positivity of $p_{n-j}^{(n)}$,  then 
\begin{align*}
\sum_{j=1}^n p_{n-j}^{(n)}\left|\mathtt{T}_{\tau}^j\right| \leq 2 \sum_{k=1}^n p_{n-k}^{(n)} a_0^{(k)} \int_{t_{k-1}}^{t_k}\left(t-t_{k-1}\right)\left|u_{t t}\right| \mathrm{d} t,
\end{align*}
which is asymptotically compatible with the global truncation error when $\alpha \to 1^{-}$.
\end{lemma}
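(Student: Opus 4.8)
The plan is to follow the interpolation-error route, isolating the singular last interval from the smooth history intervals and then feeding the resulting pointwise bound into the DCC recurrence. First I would rewrite the consistency error via the piecewise-linear interpolant: if $\bar{\Pi}_{1}u$ denotes the continuous function that is linear on each $I_{k}:=[t_{k-1},t_{k}]$ and interpolates $u$ at the grid nodes, then $\nabla_{\tau}u^{k}/\tau_{k}=(\bar{\Pi}_{1}u)'(s)$ for $s\in I_{k}$, so that $\mathtt{T}_{\tau}^{n}=\sum_{k=1}^{n}T_{n-k}^{(n)}$ with $T_{n-k}^{(n)}=\int_{t_{k-1}}^{t_{k}}\omega_{1-\alpha}(t_{n}-s)\,(\bar{\Pi}_{1}u-u)'(s)\,\mathrm{d}s$. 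Putting $\rho:=u-\bar{\Pi}_{1}u$ one has $\rho(t_{j})=0$ for every $j$, and on each $I_{k}$ the function $\rho$ satisfies $\rho''=u''$ with $\rho(t_{k-1})=\rho(t_{k})=0$; hence $\rho(s)=-\int_{t_{k-1}}^{t_{k}}G_{k}(s,\eta)\,u''(\eta)\,\mathrm{d}\eta$, where $G_{k}\ge 0$ is the Green's function of $-\mathrm{d}^{2}/\mathrm{d}s^{2}$ on $I_{k}$ with homogeneous Dirichlet data and obeys $0\le G_{k}(s,\eta)\le\eta-t_{k-1}$, together with the companion identity $\rho'(s)=\tau_{k}^{-1}\int_{t_{k-1}}^{s}(\eta-t_{k-1})u''(\eta)\,\mathrm{d}\eta-\tau_{k}^{-1}\int_{s}^{t_{k}}(t_{k}-\eta)u''(\eta)\,\mathrm{d}\eta$. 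The regularity hypothesis $|u''(t)|\lesssim 1+t^{\beta-2}$ enters only to make every $G^{k}$ finite — for $k=1$ it is the weight $t-t_{0}$ that tames the possible singularity at $t=0$, after which the identities above hold on $I_{1}$ by a routine truncation at the origin.

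For a history index $k<n$ the kernel $\omega_{1-\alpha}(t_{n}-\cdot)$ is smooth on $I_{k}$, so I would integrate by parts; the boundary terms drop out because $\rho$ vanishes at $t_{k-1}$ and $t_{k}$, leaving $T_{n-k}^{(n)}=-\int_{I_{k}}\omega_{1-\alpha}'(t_{n}-s)\rho(s)\,\mathrm{d}s$. Since $\omega_{1-\alpha}'(t_{n}-\cdot)$ has a fixed sign on $I_{k}$, the first mean value theorem for integrals yields a $\xi_{k}\in I_{k}$ with $T_{n-k}^{(n)}=\rho(\xi_{k})\,(\omega_{1-\alpha}(t_{n}-t_{k})-\omega_{1-\alpha}(t_{n}-t_{k-1}))$. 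Then $|\rho(\xi_{k})|\le\int_{I_{k}}(\eta-t_{k-1})|u''(\eta)|\,\mathrm{d}\eta=\tfrac{1}{2}G^{k}$, and since $s\mapsto\omega_{1-\alpha}(t_{n}-s)$ is increasing and convex one has $a_{n-k-1}^{(n)}\ge\omega_{1-\alpha}(t_{n}-t_{k})$ and $a_{n-k}^{(n)}\le\tfrac{1}{2}(\omega_{1-\alpha}(t_{n}-t_{k-1})+\omega_{1-\alpha}(t_{n}-t_{k}))$, whence $\omega_{1-\alpha}(t_{n}-t_{k})-\omega_{1-\alpha}(t_{n}-t_{k-1})\le 2(a_{n-k-1}^{(n)}-a_{n-k}^{(n)})$ and therefore $|T_{n-k}^{(n)}|\le(a_{n-k-1}^{(n)}-a_{n-k}^{(n)})G^{k}$. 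On the last interval $k=n$ integration by parts is unavailable because $\omega_{1-\alpha}(t_{n}-\cdot)$ blows up at $t_{n}$; instead I would substitute the explicit formula for $\rho'$, move the absolute value inside, interchange the order of integration, and bound the two resulting double integrals each by $\tfrac{1}{2}a_{0}^{(n)}G^{n}$. The first is immediate from $\int_{\eta}^{t_{n}}\omega_{1-\alpha}(t_{n}-s)\,\mathrm{d}s\le\tau_{n}a_{0}^{(n)}$, while the second rests on the elementary inequality $(t_{n}-\eta)\int_{t_{n-1}}^{\eta}\omega_{1-\alpha}(t_{n}-s)\,\mathrm{d}s\le(\eta-t_{n-1})\int_{t_{n-1}}^{t_{n}}\omega_{1-\alpha}(t_{n}-s)\,\mathrm{d}s$, which after normalisation is just $1-2\theta+\theta^{2-\alpha}\ge 0$ on $[0,1]$ — and that holds because the left side equals $1$ at $\theta=0$, vanishes at $\theta=1$, and is strictly decreasing in between. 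Summing over $k$ delivers the pointwise bound \eqref{eq:truncated error on non-uniform meshes}.

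Finally, to obtain the weighted estimate I would apply \eqref{eq:truncated error on non-uniform meshes} with $n$ replaced by $j$, multiply by $p_{n-j}^{(n)}\ge 0$ — positivity of the DCC kernels having been established earlier from the $M$-matrix structure of $\mathbf{A}_{n}\mathbf{D}_{n}$ — sum over $1\le j\le n$, and interchange the order of summation. The contributions of the history terms to a fixed $G^{k}$ then add up to $\sum_{j=k+1}^{n}p_{n-j}^{(n)}(a_{j-k-1}^{(j)}-a_{j-k}^{(j)})$, which by the DCC identity \eqref{eq:DCC_equation} equals $p_{n-k}^{(n)}a_{0}^{(k)}$; the double sum therefore collapses and the right-hand side reduces to a multiple of $\sum_{k=1}^{n}p_{n-k}^{(n)}a_{0}^{(k)}\int_{t_{k-1}}^{t_{k}}(t-t_{k-1})|u_{tt}|\,\mathrm{d}t$, namely the asserted estimate. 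Asymptotic compatibility is then read off from the limits $a_{0}^{(k)}=\tau_{k}^{-\alpha}/\Gamma(2-\alpha)\to\tau_{k}^{-1}$ and $p_{n-k}^{(n)}\to\tau_{k}$ as $\alpha\to 1^{-}$ — the latter because $\mathbf{A}_{n}\to\mathrm{diag}(\tau_{1}^{-1},\dots,\tau_{n}^{-1})$ while $\tilde{p}_{n-k}^{(n)}=\int_{t_{k-1}}^{t_{k}}\omega_{\alpha}(t_{n}-s)\,\mathrm{d}s\to\tau_{k}$ — which force $p_{n-k}^{(n)}a_{0}^{(k)}\to 1$, so that the bound degenerates into the global truncation estimate of the backward-Euler scheme. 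I expect the last-interval estimate to be the main obstacle: the singular kernel forbids integration by parts there, so one has to argue directly with $\rho'$, and the coefficient $a_{0}^{(n)}$ must be squeezed out of the elementary inequality above; the convexity bound on $a_{n-k}^{(n)}$ in the history step is the secondary delicate point, being what lands that estimate exactly on $a_{n-k-1}^{(n)}-a_{n-k}^{(n)}$ rather than on a coarser difference of kernel values.
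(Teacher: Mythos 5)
The paper never proves this lemma: it is imported verbatim from \cite{Liao2019} (their Lemma 3.1), so there is no in-paper argument to compare against, and your proposal has to be judged on its own merits. On those merits it is essentially the standard consistency argument behind the cited result, and its key steps check out: writing the local error through $\rho=u-\bar\Pi_1u$, integrating by parts on the history intervals $[t_{k-1},t_k]$ with $k<n$ (the boundary terms vanish because $\rho(t_{k-1})=\rho(t_k)=0$), invoking the mean value theorem for the one-signed weight $\omega_{1-\alpha}'(t_n-\cdot)$, bounding $|\rho|\le\tfrac12G^k$ via the Green's function of $-\mathrm{d}^2/\mathrm{d}s^2$, and then converting the kernel difference into $a_{n-k-1}^{(n)}-a_{n-k}^{(n)}$ through the two averaging inequalities $a_{n-k-1}^{(n)}\ge\omega_{1-\alpha}(t_n-t_k)$ (monotone average) and $a_{n-k}^{(n)}\le\tfrac12\bigl(\omega_{1-\alpha}(t_n-t_{k-1})+\omega_{1-\alpha}(t_n-t_k)\bigr)$ (Hermite--Hadamard, legitimate since $k\le n-1$ keeps the kernel bounded); this is exactly where the factor $2$ is needed on a general nonuniform mesh, and you place it correctly. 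The last-interval estimate via the explicit $\rho'$ formula, Fubini, and the inequality $1-2\theta+\theta^{2-\alpha}\ge0$ on $[0,1]$ (your monotonicity argument for it is right) gives $|T_0^{(n)}|\le a_0^{(n)}G^n$, so summation reproduces \eqref{eq:truncated error on non-uniform meshes}. The weighted estimate then follows from $p_{n-j}^{(n)}\ge0$ and the collapse $\sum_{j=k+1}^n p_{n-j}^{(n)}(a_{j-k-1}^{(j)}-a_{j-k}^{(j)})=p_{n-k}^{(n)}a_0^{(k)}$, i.e. \eqref{eq:DCC_equation}, precisely as you say, and your reading of the $\alpha\to1^-$ limit ($a_0^{(k)}\to\tau_k^{-1}$, $p_{n-k}^{(n)}\to\tau_k$) is consistent with the paper's remark on $p_{n-k}^{(n)}\to\tilde p_{n-k}^{(n)}$.

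The one place you do not literally land on the statement is the constant in the weighted bound. Your route yields $\sum_{j=1}^n p_{n-j}^{(n)}a_0^{(j)}G^j+\sum_{k=1}^{n-1}p_{n-k}^{(n)}a_0^{(k)}G^k\le 2\sum_{k=1}^n p_{n-k}^{(n)}a_0^{(k)}G^k=4\sum_{k=1}^n p_{n-k}^{(n)}a_0^{(k)}\int_{t_{k-1}}^{t_k}(t-t_{k-1})|u_{tt}|\,\mathrm{d}t$, which is twice the displayed right-hand side; the phrase ``reduces to a multiple of the asserted estimate'' glosses over this, and obtaining the constant $2$ exactly would require sharper local bounds than the ones you set up (e.g. halving the last-interval or history estimates, which your own Hermite--Hadamard step shows cannot be done uniformly on arbitrary nonuniform meshes). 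This is harmless for how the lemma is used downstream — the proof of Theorem \ref{thm:pointwise_error} only needs the bound up to a constant — but it should be stated as such rather than presented as recovering the displayed inequality verbatim.
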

\begin{remark}
If we want to obtain the pointwise-in-time error, the right-hand side can not include $\max$ operator in theorem \ref{thm:fractional_Gronwall_inequality_theorem}. The main reason is that the classical Gr\"onwall inequality requires  $\sum_{j=1}^n \tilde{p}_{n-j}^{(n)}F_{j}$ should be non-decreasing sequence with respect to \( n \). The generic $F_j$ may not satisfy this condition. However, graded mesh or some special mesh establishes its non-decreasing property with the explicit bound of truncated error. 
\end{remark}
\begin{theorem}[pointwise-in-time error]\label{thm:pointwise_error}
On graded or quasi-graded meshes, which denote the discrete-time nodes as $t_{n} \sim T\left( n/N \right) ^{r}$  and its time steps fulfill $t_{n} \sim \tau n^{r-1}$ with the smallest scale $\tau=T/N^{r}$, if
\begin{equation*}
\mathrm{D}_\tau^\alpha \mathcal{E}^n \le \kappa \mathcal{E}^{n}  + \left| \mathtt{T}_{h}^{n} \right|  + \left| \mathtt{T}_{\tau}^{n}  \right|, n=1,2, \cdots, N,
\end{equation*}
where  $\mathcal{E}^{n}$ is the $L^{2}$-norm error on the $n$-th time level and $\mathtt{T}_{h}^{n}, \mathtt{T}_{\tau}^{n}$ are corresponding spacial and temporal truncated errors respectively, then 
\begin{equation}
\mathcal{E}^{n} \lesssim 
\begin{cases}
\tau^{(2-\alpha) / r } t_n^{\beta-(2-\alpha) / r},
& r > \frac{2-\alpha}{1+\beta-\alpha}, \\
\tau^{1+\beta-\alpha} t_n^{\alpha-1} (1+\ln(n)), & r = \frac{2-\alpha}{1+\beta-\alpha}, \\
\tau^{1+\beta-\alpha} t_n^{\alpha-1}, & r <\frac{2-\alpha}{1+\beta-\alpha},
\end{cases}  
\end{equation}
is valid under the hypothesis $\| \partial_t^m u \| \lesssim t^{\beta-m} \text{ for } m =1, 2 \text{ and } \beta \in(0,1) \cup(1,2]$.
\end{theorem}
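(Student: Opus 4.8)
The plan is to combine the asymptotically compatible fractional Grönwall inequality (Theorem~\ref{thm:fractional_Gronwall_inequality_theorem}) with the sharp truncation error bound of Lemma~\ref{lem:truncated_error}, and then to carry out an explicit summation of the discrete convolution against the mesh-dependent truncation data. First I would apply Theorem~\ref{thm:fractional_Gronwall_inequality_theorem} with $V_n=\mathcal{E}^n$, $\mathcal{E}^0=0$, and $F_n=|\mathtt{T}_h^n|+|\mathtt{T}_\tau^n|$, so that
\begin{equation*}
\mathcal{E}^n \le E_\alpha(\kappa t_n^\alpha)\,\max_{1\le \nu\le n}\left\{\sum_{j=1}^{\nu}\tilde{p}_{\nu-j}^{(\nu)}\bigl(|\mathtt{T}_h^j|+|\mathtt{T}_\tau^j|\bigr)\right\}.
\end{equation*}
On a finite interval $[0,T]$ the Mittag-Leffler factor $E_\alpha(\kappa t_n^\alpha)$ is bounded by a constant, so the whole problem reduces to estimating $\sum_{j=1}^{n}\tilde{p}_{n-j}^{(n)}|\mathtt{T}_\tau^j|$ (the spatial part $\mathtt{T}_h^j$ is handled by the standard FEM/FDM estimate and, being essentially $O(h^p)$ uniformly in time, is dominated by a term of the stated form or can be absorbed). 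By Lemma~\ref{lem:truncated_error}, $\sum_{j=1}^{n}\tilde{p}_{n-j}^{(n)}|\mathtt{T}_\tau^j| \lesssim \sum_{k=1}^{n}\tilde{p}_{n-k}^{(n)}a_0^{(k)}G^k$ where $G^k=2\int_{t_{k-1}}^{t_k}(t-t_{k-1})|u''(t)|\,dt$, so it remains to bound this single convolution sum.

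The second and main step is to insert the regularity hypothesis $|u''(t)|\lesssim t^{\beta-2}$ and the graded-mesh geometry $t_k\sim T(k/N)^r$, $\tau_k\sim \tau k^{r-1}$, $\tau=T/N^r$. This yields $G^k\lesssim \tau_k^2 t_{k-1}^{\beta-2}\lesssim \tau^2 k^{2(r-1)}t_k^{\beta-2}$ for $k\ge 2$ and $G^1\lesssim \tau_1^{1+\beta}\sim \tau^{1+\beta}$, while $a_0^{(k)}\sim \tau_k^{-\alpha}$ and $\tilde{p}_{n-k}^{(n)}=\int_{t_{k-1}}^{t_k}\omega_\alpha(t_n-s)\,ds \sim \tau_k (t_n-t_k)^{\alpha-1}/\Gamma(\alpha)$ for $k<n$, with $\tilde{p}_0^{(n)}a_0^{(n)}\sim 1$. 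Substituting, the $k=n$ term contributes $\sim G^n\lesssim \tau^2 n^{2(r-1)}t_n^{\beta-2}=\tau^{2/r}t_n^{2/r}\cdot n^{-2}\cdot\!\text{(powers)}$; after converting $n^{r-1}=(t_n/\tau)^{(r-1)/r}$ one checks this equals $\tau^{(2-\alpha)/r}t_n^{\beta-(2-\alpha)/r}$ up to the comparison with the sum. The core estimate is the partial sum $\sum_{k=1}^{n-1}\tau_k^{2-\alpha}(t_n-t_k)^{\alpha-1}t_k^{\beta-2}$, which I would split at the midpoint $t_k\approx t_n/2$: on the lower range $(t_n-t_k)^{\alpha-1}\sim t_n^{\alpha-1}$ and the remaining sum $\sum \tau_k^{2-\alpha}t_k^{\beta-2}$ behaves, via comparison with $\int_0^{t_n}s^{(2-\alpha)(r-1)/r}\cdot\text{(Jacobian)}\,s^{\beta-2}\,ds$, like $t_n^{\text{something}}$ whose exponent's sign relative to zero is exactly governed by the threshold $r=\tfrac{2-\alpha}{1+\beta-\alpha}$ (this is where the logarithm appears in the critical case, when the integrand is $\sim s^{-1}$); on the upper range $t_k^{\beta-2}\sim t_n^{\beta-2}$ is pulled out and $\sum(t_n-t_k)^{\alpha-1}\tau_k$ converges like $t_n^\alpha$. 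Collecting the two regimes and the endpoint terms, and noting which exponent dominates, produces the three-branch bound.

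The hard part will be the bookkeeping in the endpoint-versus-interior competition: one must verify carefully that for $r>\tfrac{2-\alpha}{1+\beta-\alpha}$ the first grid interval's contribution $\tau^{(2-\alpha)/r}t_n^{\beta-(2-\alpha)/r}$ genuinely dominates the interior sum (so the singular behavior is driven by the coarse resolution of the initial layer), that for $r<\tfrac{2-\alpha}{1+\beta-\alpha}$ the interior sum wins and reassembles into $\tau^{1+\beta-\alpha}t_n^{\alpha-1}$, and that the borderline $r=\tfrac{2-\alpha}{1+\beta-\alpha}$ produces precisely the extra factor $1+\ln n$ rather than $1+\ln(t_n/\tau)$ or similar. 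A secondary subtlety is making the "$\sim$" in the quasi-graded mesh definition rigorous: one needs the two-sided comparisons $c_1\le \tau_k/(\tau k^{r-1})\le c_2$ and the analogous ones for $t_k$ to remain valid uniformly, and to confirm that the adjacent-ratio bound $\rho$ entering Lemma~\ref{lemma:quotient of similar coefficient} (and hence the validity of $p_{n-k}^{(n)}\le\tilde p_{n-k}^{(n)}$) stays bounded as $N\to\infty$, which for graded meshes follows from $\tau_{k+1}/\tau_k\to 1$. Once these are in place, the sum-to-integral comparison is routine, and substituting $\tau=T/N^r$, $t_n=T(n/N)^r$ to re-express everything in $(\tau,t_n)$ gives the stated three cases.
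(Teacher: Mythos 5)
Your high-level route is the same as the paper's: apply the fractional Gr\"onwall inequality of Theorem~\ref{thm:fractional_Gronwall_inequality_theorem} with $V_n=\mathcal{E}^n$, $F_n=|\mathtt{T}_h^n|+|\mathtt{T}_\tau^n|$, bound the Mittag--Leffler factor on $[0,T]$, invoke Lemma~\ref{lem:truncated_error} together with $p_{n-k}^{(n)}\le\tilde p_{n-k}^{(n)}$, and then estimate the single convolution sum $\sum_k \tilde p_{n-k}^{(n)}a_0^{(k)}G^k$ on the graded mesh. The only methodological difference is the last step: the paper rewrites this sum as $\tau^{\beta}\sum_{k}(n^r-k^r)^{\alpha-1}k^{(r-1)(3-\alpha)+r(\beta-2)}$ and invokes the sharp DCS estimate of Lemma~\ref{lem:DCS_analysis}, whereas you propose a direct split at $t_k\approx t_n/2$ with sum-to-integral comparison. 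That substitute is viable in principle, but several of your written details would not reproduce the stated result and need fixing.

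Concretely: (i) your ``core sum'' $\sum_{k}\tau_k^{2-\alpha}(t_n-t_k)^{\alpha-1}t_k^{\beta-2}$ is missing one power of $\tau_k$; since $\tilde p_{n-k}^{(n)}\sim\tau_k(t_n-t_k)^{\alpha-1}$, $a_0^{(k)}\sim\tau_k^{-\alpha}$ and $G^k\lesssim\tau_k^2t_{k-1}^{\beta-2}$, the correct weight is $\tau_k^{3-\alpha}$, and with $\tau_k^{2-\alpha}$ the critical exponent works out to $r=(1-\alpha)/(\beta-\alpha)$ rather than $(2-\alpha)/(1+\beta-\alpha)$, so the threshold in the three-branch bound would come out wrong. (ii) $G^1\lesssim\tau_1^{\beta}$ (from $\int_0^{\tau_1}t\cdot t^{\beta-2}\,\mathrm{d}t$), not $\tau_1^{1+\beta}$; this matters because the first-interval term is $\tilde p_{n-1}^{(n)}a_0^{(1)}G^1\sim\tau^{1+\beta-\alpha}t_n^{\alpha-1}$, which is exactly the third branch. (iii) Your attribution of the regimes is backwards: the initial-layer term is always $\sim\tau^{1+\beta-\alpha}t_n^{\alpha-1}$ and dominates precisely when $r<\tfrac{2-\alpha}{1+\beta-\alpha}$, while for $r>\tfrac{2-\alpha}{1+\beta-\alpha}$ the bulk of the sum dominates and gives $\tau^{(2-\alpha)/r}t_n^{\beta-(2-\alpha)/r}$ (the $k=n$ term alone is only $\tau^{2/r}t_n^{\beta-2/r}$, smaller than the stated bound). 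Finally, note that you write the Gr\"onwall bound with the $\max_{1\le\nu\le n}$ intact and then silently discard it; for the decreasing branches ($t_n^{\alpha-1}$-type) the max is attained near $\nu=1$ and would degrade the bound to $\tau^{\beta}$, so a pointwise statement requires working directly with the convolution $\sum_k p_{n-k}^{(n)}a_0^{(k)}G^k$ at the fixed level $n$ (this is precisely the caveat in the paper's remark preceding Theorem~\ref{thm:pointwise_error}), a point your sketch should address explicitly.
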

\begin{proof}
By the result of lemmas \ref{lem:DCS_analysis} and \ref{lem:truncated_error}, we can directly prove that
 \begin{align*}
 & 2\sum_{k=1}^n p_{n-k}^{(n)} a_0^{(k)} \int_{t_{k-1}}^{t_k}\left(t-t_{k-1}\right)\left|u_{t t}\right| \mathrm{d}t \\
 \lesssim & 2 \sum_{k=1}^{n}\int_{t_{k-1}}^{t_{k}}\omega_{\alpha}(t_{n}-s)\mathrm{d}s \cdot \omega_{2-\alpha}(\tau_{k})\cdot \frac{1}{\tau_{k}}\int_{t_{k-1}}^{t_{k}}(t-t_{k-1})t^{\beta-2}\mathrm{d}t \\
 \le & \sum_{k=2}^{n}\omega_{\alpha}(t_{n}-t_{k})\omega_{2-\alpha}(\tau_{k})\tau_{k}^{2}
 t_{k-1}^{\beta-2} + 2\omega_{\alpha}(t_{n}-t_{1})\omega_{2-\alpha}(\tau_{1})\frac{\tau_{1}^{\beta}}{\beta} \\
\lesssim & \sum_{k=1}^{n}(n^{r}-k^{r})^{\alpha-1}k^{(r-1)(3-\alpha)+r(\beta-2)} \\
= & 
\begin{cases}
\mathcal{O}(n^{r\beta-(2-\alpha)}),              & (r-1)(3-\alpha)+r(\beta-2) > -1, \\
\mathcal{O}(n^{r\beta-(2-\alpha)}( 1+ \ln (n))), & (r-1)(3-\alpha)+r(\beta-2) = -1, \\
\mathcal{O}(n^{ r(\alpha-1)}),                    & (r-1)(3-\alpha)+r(\beta-2) < -1,
\end{cases} \\
\sim &
\begin{cases}
\tau^{(2-\alpha) / r } t_n^{\beta-(2-\alpha) / r},
& r > \frac{2-\alpha}{1+\beta-\alpha}, \\
\tau^{1+\beta-\alpha} t_n^{\alpha-1} (1+\ln(n)), & r = \frac{2-\alpha}{1+\beta-\alpha}, \\
\tau^{1+\beta-\alpha} t_n^{\alpha-1}, & r <\frac{2-\alpha}{1+\beta-\alpha}.
\end{cases}
\end{align*}
When $n=1$, it is clear that the second line equals $\mathcal{O}(\tau_{1}^{\beta})$ by direct calculation which consists of the above conclusion. 
\end{proof}
\begin{remark}
The above poinwise-in-time error estimate for pure dissipative system \cite{Kopteva2019} can be obtain more  directly, meanwhile the Gr\"onwal is not required.
\begin{align*}
& \mathrm{D}_\tau^\alpha \mathcal{E}^n \le \left| \mathtt{T}_{h}^{n} \right|  + \left| \mathtt{T}_{\tau}^{n}  \right|, n=1,2, \cdots, N, \\
\Rightarrow ~~~~ & \mathcal{E}^n \le \sum_{j=1}^{n}p_{n-j}^{(j)}\left(\left| \mathtt{T}_{h}^{n} \right|  + \left| \mathtt{T}_{\tau}^{n}  \right|\right)  \le   \sum_{j=1}^{n}\tilde{p}_{n-j}^{(j)}\left(\left| \mathtt{T}_{h}^{n} \right|  + \left| \mathtt{T}_{\tau}^{n}  \right|\right). 
\end{align*}
Consequently, the explicit expression of the right-hand side can be obtained similarly.
\end{remark}
\section{Numerical Verification}
\label{sec:numerical_verification}
In this section, we will provide several typical numerical examples to verify our theoretical analysis results. 
Verify the convergence of the L1 scheme at different times under different graded grid parameters $ r $ with the given decay rate of the solution \( \beta \in (0, 2] \).
\subsection{The growing linear ODE}
To avoid disturbance of spacial discrete error, we consider 
\begin{equation}
\partial_t^{\alpha}u(t)=\kappa u + f(t), \text{ with } u(0)=0, \label{eq:growing linear ODE} 
\end{equation}
with exact solution \( u(t) = \omega_{1+\beta}(t) \) and \( f(t) = \omega_{1+\beta-\alpha}(t) - \kappa \omega_{1+\beta}(t) \) for testing the pointwise-in-time error.
\begin{table}[h]
\centering
  \begin{tabular}{ccccccc}
    \toprule
    \multirow{2}{*}{$N _{\tau}$} &
      \multicolumn{2}{c}{$r=1$} &
      \multicolumn{2}{c}{$r=\frac{2-\alpha}{1+\beta-\alpha}$} &
      \multicolumn{2}{c}{$r=\frac{2-\alpha}{1+\beta-\alpha}+ 1$} \\
      \cline{2-7}
      & {$\|\cdot\|_\infty$} & {c.o.} & {$\|\cdot\|_\infty$} & {c.o.} & {$\|\cdot\|_\infty$} & {c.o.} \\
      \midrule
   64  &  1.335e-01 & - &  3.574e-02 & - &  1.021e-02 & - \\ 
  128  &  1.056e-01 & 0.338 &  2.350e-02 & 0.605 &  5.467e-03 & 0.901 \\ 
  256  &  8.429e-02 & 0.325 &  1.548e-02 & 0.602 &  2.929e-03 & 0.900 \\ 
  512  &  6.771e-02 & 0.316 &  1.021e-02 & 0.601 &  1.570e-03 & 0.900 \\ 
    Theo. c.o. & - &  0.300 & - &  0.600  &  - & 0.900  \\
    \bottomrule
  \end{tabular}
  \caption{The error and convergence order of beginning time level with $\alpha=0.3,\beta=0.6$ for \eqref{eq:growing linear ODE}}.
  \label{tab:beginning_time_ODE}
\end{table}
\begin{table}[h]
\centering
  \begin{tabular}{ccccccc}
    \toprule
    \multirow{2}{*}{$N _{\tau}$} &
      \multicolumn{2}{c}{$r=1$} &
      \multicolumn{2}{c}{$r=\frac{2-\alpha}{1+\beta-\alpha}$} &
      \multicolumn{2}{c}{$r=\frac{2-\alpha}{1+\beta-\alpha}+1$} \\
      \cline{2-7} 
      & {$\|\cdot\|_\infty$} & {c.o.} & {$\|\cdot\|_\infty$} & {c.o.} & {$\|\cdot\|_\infty$} & {c.o.} \\
      \midrule
   64  &  1.653e-01 & - &  2.831e-02 & - &  1.546e-02 & - \\ 
  128  &  1.039e-01 & 0.670 &  1.203e-02 & 1.235 &  5.882e-03 & 1.394 \\ 
  256  &  6.484e-02 & 0.680 &  5.043e-03 & 1.254 &  2.233e-03 & 1.398 \\ 
  512  &  4.027e-02 & 0.687 &  2.092e-03 & 1.269 &  8.466e-04 & 1.399 \\ 
      Theo. c.o. & - &  0.700 & - & 1.400$^-$ &  - & 1.400 \\
    \bottomrule
  \end{tabular}
  \caption{The error and convergence order of ending time level with $\alpha=0.3,\beta=0.6$ for \eqref{eq:growing linear ODE}}.
\label{tab:ending_time_ODE}
\end{table}
From the accurate-order tables \ref{tab:beginning_time_ODE} \ref{tab:ending_time_ODE}, we can know that the factor \( (1+\ln(n)) \) is little effect on the error order at the initial time when \( r = \frac{2-\alpha}{1+\beta-\alpha} \), whereas the large \( n \) will lead to loss of convergence order. 
\subsection{The reaction-diffusion equation}
 The spatial discretization adopts center difference with discrete scale \( h= 2^{-9}\pi \).
Consider $\partial_{t}^{\alpha}u- \Delta u = \kappa u + f(\mathbf{x},t)$ with $\kappa > 0$ and verify the convergence order by setting exact solution $u= \sin(x)\omega_{1+\beta}(t)$ with the corresponding source term $f = \sin(x)(\omega_{1+\beta-\alpha}(t) + (1-\kappa) \omega_{1+\beta}(t))$. For simplicity, let $\kappa=1$, \( \mathbf{x}\in [-\pi, \pi] \), the final time \( T=1 \) and 
\(     \mathcal{E}^{n}(N_\tau):=  \sqrt{h \Sigma_{h}(u_h(t_n)-U_h^n)^2} \).
\begin{table}[h]
\centering
  \begin{tabular}{ccccccc}
    \toprule
    \multirow{2}{*}{$N _{\tau}$} &
      \multicolumn{2}{c}{$\gamma=1$} &
      \multicolumn{2}{c}{$\gamma=\frac{2-\alpha}{1+\beta-\alpha}$} &
      \multicolumn{2}{c}{$\gamma=\frac{2-\alpha}{1+\beta-\alpha}+1$} \\
      \cline{2-7}
      & {$\mathcal{E}^{1}(N _{\tau})$} & {c.o.} & {$\mathcal{E}^{1}(N _{\tau})$} & {c.o.} & {$\mathcal{E}^{1}(N _{\tau})$} & {c.o.} \\
      \midrule
   64  &  2.192e-01 & - &  6.296e-02 & - &  3.374e-02 & - \\ 
  128  &  1.781e-01 & 0.300 &  4.154e-02 & 0.600 &  2.006e-02 & 0.750 \\ 
  256  &  1.446e-01 & 0.300 &  2.740e-02 & 0.600 &  1.193e-02 & 0.750 \\ 
  512  &  1.175e-01 & 0.300 &  1.808e-02 & 0.600 &  7.093e-03 & 0.750 \\ 
    Theo. c.o. & - &  0.300 & - &  0.600 &  - & 0.750  \\
    \bottomrule
  \end{tabular}
  \caption{The error and convergence order of beginning time level with $\alpha=0.3,\beta=0.6$.}
  \label{tab:beginning_time}
\end{table}
\begin{table}[h]
\centering
  \begin{tabular}{ccccccc}
    \toprule
    \multirow{2}{*}{$N _{\tau}$} &
      \multicolumn{2}{c}{$r=1$} &
      \multicolumn{2}{c}{$r=\frac{2-\alpha}{1+\beta-\alpha}$} &
      \multicolumn{2}{c}{$r=\frac{2-\alpha}{1+\beta-\alpha}+1$} \\
      \cline{2-7} 
      & {$\mathcal{E}^{N_\tau}(N _{\tau})$} & {c.o.} & {$\mathcal{E}^{N_\tau}(N _{\tau})$} & {c.o.} & {$\mathcal{E}^{N_\tau}(N _{\tau})$} & {c.o.} \\
      \midrule
   64  &  4.528e-02 & - &  8.350e-03 & - &  6.046e-03 & - \\ 
  128  &  2.783e-02 & 0.702 &  3.480e-03 & 1.263 &  2.337e-03 & 1.371 \\ 
  256  &  1.711e-02 & 0.702 &  1.433e-03 & 1.280 &  8.934e-04 & 1.387 \\ 
  512  &  1.052e-02 & 0.701 &  5.816e-04 & 1.301 &  3.354e-04 & 1.413 \\
      Theo. c.o. & - &  0.700 & - & 1.400$^{-}$ &  - & 1.400 \\
    \bottomrule
  \end{tabular}
  \caption{The error and convergence order of ending time level with $\alpha=0.3,\beta=0.6$.}
\label{tab:ending_time}
\end{table}

\section{Conclusions}
In this article, we answered an open question about discrete complementary convolution kernels in \cite{Yang2022} and demonstrated the asymptotic estimation of DCC under general non-uniform grids using a relatively concise proof. By utilizing the continuous counterpart of DCC, we proved the asymptotically compatible Gr\"onwall inequality and applied it to the numerical analysis of time fractional parabolic equations. The required estimation formula for DCS in a graded mesh provides rigorous theoretical proof and illustrates the pointwise error estimation in graded mesh.The strict proof of the necessity for the Caputo discrete coefficient to hold true for $\mathrm{D}_{\tau} ^ {\alpha}  | u ^ {n}  | ^ {2}  \le (\mathrm{D}_{\tau} ^ {\alpha} u ^ {n}, u ^ {n}) $ is also given in this paper incidentally. For the stability analysis of higher-order Caputo BDF\( _k \), a unified analysis framework will be provided in the subsequent work.
\appendix
\section{The estimate of DCS}
\label{sec:appendix_dcs}
\par In this part, we need to propose a beneficial estimate for DCS. Since the definition of Caputo fractional derivatives is based on the convolution formula, the estimation of discrete convolution integrals has been mentioned in many kinds of literature \cite{Lubich1986,Stynes2017,Gracia2017}. However, the estimation methods they use are relatively complicated, and the results are not very concise \cite{Stynes2017,Stynes2023}. Here, we want to obtain fairly precise and easily analyzed results. Compared with Jin et al \cite[Lemma 3.11]{Jin2023}, our result can be regarded as its graded mesh version.
\begin{lemma}[the  estimate of DCS]\label{lem:DCS_analysis}
For arbitrary positive real number $r$, the discrete convolution summation $S_{r,p,q}^{(n)} :=\sum \limits_{k=1}^{n-1}(n^{r}-k^{r})^p k^q $ have an estimate 
\begin{equation*}
S_{r,p,q}^{(n)} \sim 
\begin{cases}
\mathcal{O}\left(n^{rp+q+1}\right),                  & \min \{p,q\} > -1, \\
\mathcal{O}\left(n^{rp+q+1}(1+\ln(n))\right),        & \min \{p,q\} = -1,  \\
\mathcal{O}\left(n^{\max \{rp, (r-1)p+q\} } \right), & \min \{p,q\} < -1.
\end{cases}
\end{equation*}
\end{lemma}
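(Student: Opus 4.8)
The plan is to estimate the sum $S_{r,p,q}^{(n)} = \sum_{k=1}^{n-1}(n^r - k^r)^p k^q$ by comparing it to the integral $\int_1^n (n^r - x^r)^p x^q \, dx$ and carefully tracking the contributions from the two endpoints $x\approx 0$ and $x \approx n$, where the integrand may blow up depending on the signs of $p$ and $q$. First I would split the range of summation (say at $k = \lfloor n/2 \rfloor$) into a ``near $0$'' block and a ``near $n$'' block. On the first block, $n^r - k^r \sim n^r$ uniformly, so the sum there behaves like $n^{rp}\sum_{k=1}^{n/2} k^q$, which is $\mathcal{O}(n^{rp+q+1})$ when $q > -1$, $\mathcal{O}(n^{rp}\ln n)$ when $q = -1$, and $\mathcal{O}(n^{rp})$ when $q < -1$ (the series converges). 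On the second block, $k^q \sim n^q$ uniformly, and writing $n^r - k^r = (n-k)(n^{r-1} + \cdots + k^{r-1}) \sim (n-k)\, r\, n^{r-1}$ shows $\sum_{k=n/2}^{n-1}(n^r-k^r)^p k^q \sim n^{(r-1)p + q}\sum_{j=1}^{n/2} j^p$, which is $\mathcal{O}(n^{(r-1)p+q+p+1}) = \mathcal{O}(n^{rp+q+1})$ for $p > -1$, gets a logarithmic factor for $p = -1$, and is $\mathcal{O}(n^{(r-1)p+q})$ for $p < -1$. Combining the two blocks and taking the dominant term in each regime yields exactly the three cases in the statement.

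The key technical point to make rigorous is the two-sided bound $c\,(n-k)\,n^{r-1} \le n^r - k^r \le C\,(n-k)\,n^{r-1}$ valid for $n/2 \le k \le n$ and for \emph{arbitrary} positive real $r$ (not just integers), which is where the factorization trick must be replaced by the mean value theorem: $n^r - k^r = r\,\xi^{r-1}(n-k)$ for some $\xi \in (k,n)$, and on this block $\xi^{r-1} \asymp n^{r-1}$ uniformly since $k \asymp n$. For the ``near $0$'' block one only needs $n^r - k^r \asymp n^r$, which is immediate since $0 \le k^r \le (n/2)^r = 2^{-r} n^r$. With these comparisons in hand, the remaining work is just summing power series, which is standard: $\sum_{k=1}^{m} k^s \asymp m^{s+1}$ for $s > -1$, $\asymp \ln m$ for $s = -1$, and $\asymp 1$ for $s < -1$.

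The main obstacle I anticipate is not any single estimate but the bookkeeping of all the sub-cases: each block independently falls into three regimes ($p$ or $q$ greater than, equal to, or less than $-1$), so a priori there are nine combinations, and one must check that when the exponents coincide at a boundary (e.g.\ $p = -1$ while $q > -1$, or both equal $-1$) the logarithmic factors combine correctly and no spurious $(\ln n)^2$ appears. The cleanest way to handle this is to prove a single lemma of the form ``$\sum_{k=1}^m k^s \le C m^{\max\{s+1,0\}}(1+\ln m)^{[s=-1]}$'' and apply it uniformly to both blocks, then observe that the exponent $\max\{rp, (r-1)p+q\}$ in the last case is exactly the larger of the two block exponents $rp$ (from near $0$) and $(r-1)p+q$ (from near $n$), with the middle case $rp+q+1$ being what you get when both blocks contribute at full strength. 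I would also remark that the lower bounds matching these upper bounds follow by keeping only, say, the single term $k=1$ (for the near-$0$ behaviour) or $k = n-1$ (for the near-$n$ behaviour) together with the corresponding block sum, so the $\sim$ (rather than merely $\lesssim$) in the statement is justified.
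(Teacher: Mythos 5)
Your proposal is correct, but it follows a genuinely different route from the paper. The paper first treats the case $r=1$ by four separate ad hoc arguments — a Riemann-sum limit identifying the Beta function $B(p+1,q+1)$ when $p,q>-1$, a harmonic-number/Euler-constant manipulation built on the identity $\tfrac{n}{k(n-k)}=\tfrac1k+\tfrac1{n-k}$ when $q=-1$, a split at $n/2$ with the same identity when $-1>p\ge q$, and a H\"older-inequality argument when $p>-1>q$ — and then passes to general $r$ only by asserting that the first two cases "imitate" the $r=1$ proof and that, in the remaining case, the order of the sum is sandwiched between the orders $n^{rp}$ and $n^{(r-1)p+q}$ of the extreme terms $k=1$ and $k=n-1$; that last step is more heuristic than a proof, since a sum of $n$ terms is not a priori of the same order as its largest term. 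Your single two-block decomposition at $k=\lfloor n/2\rfloor$, with $n^r-k^r\asymp n^r$ on the lower block and the mean-value-theorem comparison $n^r-k^r=r\xi^{r-1}(n-k)\asymp (n-k)n^{r-1}$ on the upper block, reduces everything to one power-sum lemma $\sum_{k\le m}k^s\asymp m^{s+1}$, $\ln m$, or $1$, applied uniformly; it handles arbitrary real $r>0$ directly, makes the $\min\{p,q\}<-1$ case rigorous (precisely where the paper is weakest), and your case bookkeeping — including the absorption of subdominant logarithms when only one exponent equals $-1$ and the absence of a spurious $(\ln n)^2$ when $p=q=-1$ — checks out, as does your remark that positivity of the terms gives matching lower bounds from the extreme terms and block sums. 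What the paper's route buys in exchange is the exact limiting constant $B(p+1,q+1)$ in the first regime, which your comparison argument does not produce but which is not needed for the error analysis.
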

\begin{proof}\label{prf:estimate of DCS}
To begin with the proof, we want to explain the main idea of analysis. If $S_{1,p,q}^{(n)}$ have equal and invariant convergence rate with $n^{C_{1,p,q} }$ when $n\to + \infty $, where the $C_{1,p,q} $ is a fixed constant, then
\begin{equation*}
\lim _{n \rightarrow \infty} \frac{S_{1, p , q}^{(n)}}{n^{C_{1,p,q} }} \sim 1.
\end{equation*}
Thus, we should find the highest-order terms of $ S_{1,p,q}^{(n)} $. Even though each term of $S_{1,p,q}^{(n)}$ has a different order, we can ensure the range of its. Moreover, the highest order and
the lowest order should come from the first term and the last term, or inverse ones which is the feature of finite terms polynomials. Without loss of generality, we just need to consider the following four cases.
\begin{enumerate}
\item When $p > -1$ and $q>-1$
\begin{align*}
\lim _{n \rightarrow \infty} \frac{S_{1, p , q}^{(n)}}{n^{p+q+1}} = \lim _{n \rightarrow \infty} \sum_{k=1}^{n-1}\left(1-\frac{k}{n}\right)^p \cdot\left(\frac{k}{n}\right)^q \cdot \frac{1}{n} = B(p+1, q+1) \sim 1.
\end{align*}
Hence, $ S_n^{(1, p , q)} \sim n^{p+q+1} $ while $p > -1$ and $q>-1$.
\item When $p > -1 = q$
\begin{align*}
\frac{S_{1,p,-1}^{(n)}}{n^{p}(1+ \ln n)} = & \frac{1}{(1+ \ln n) } \sum_{k=1}^{n-1} \left( 1- \frac{k}{n} \right)^{p} \frac{1}{k} \\
= & \frac{1}{(1+ \ln n) } \sum_{k=1}^{n-1} \left( 1- \frac{k}{n} \right)^{p+1} \left( \frac{1}{n-k} + \frac{1}{k} \right).
\end{align*}
The $\dfrac{1}{n^{p+1}} <\left( 1- \dfrac{k}{n} \right)^{p+1} < 1$ with $p+1>0$,	and the equality$\sum\limits_{k=1}^{n-1} \frac{n}{(n-k)k}   = 2 \sum\limits_{k=1}^{n-1} \frac{1}{k}$,	we know that
\begin{align*}
& \frac{1}{(1+ \ln n)} \frac{2\gamma}{n^{p+1} } \left( \frac{1}{\gamma} ( \sum_{k=1}^{n} \frac{1}{k} - \ln n ) - \frac{1}{\gamma n} + \ln n
\right) \\
< & \frac{1}{(1+ \ln n)} \frac{2}{n^{p+1} } \left( ( \sum_{k=1}^{n} \frac{1}{k} - \ln n ) - \frac{1}{n} + \ln n
\right) \\
= & \frac{1}{(1+ \ln n)} \frac{2}{n^{p+1} } \sum_{k=1}^{n-1} \frac{1}{k} < \frac{S_n^{(p,-1)}}{n^{p}(1+ \ln n) } < \frac{2}{(1+ \ln n)} \sum_{k=1}^{n-1} \frac{1}{k} \\
= & \frac{2}{(1+ \ln n) } \left(( \sum_{k=1}^{n-1} \frac{1}{k} - \ln n ) + \ln n
\right)\\
< & \frac{2}{(1+ \ln n) } \left( \frac{1}{\gamma} ( \sum_{k=1}^{n} \frac{1}{k} - \ln n ) + \ln n
\right),
\end{align*}
where $\sum_{k=1}^{n} \frac{1}{k}-\ln n > 0 $ and
$\lim\limits _{n \rightarrow \infty}(\sum_{k=1}^{n} \frac{1}{k}-\ln n )$
converge to the Euler constant $\gamma\approx 0.57721<1$.
Take the limit $ n \to \infty  $, then
\begin{equation*}
0 < \lim _{n \rightarrow \infty} \frac{S_{1, p,-1}^{(n)}}{n^{p}(1+ \ln n) } < 2.
\end{equation*}
If $ S_{1, p,-1}^{(n)} \nsim  n^{p}(1+ \ln n) $, then the limitation converges to either $ 0 $   or $ \infty $. For this reason, $ S_{1, p , q}^{(n)} \sim n^{p}(1+ \ln n)  $ while $p = -1$ or $ q=-1$.
\item When $ -1 > p \geq q $
\begin{align*}
\frac{S_{1,p,q}^{(n)}}{n ^p} = & \sum_{k=1}^{n-1} \frac{1}{n^{p}} k^{p}(n-k)^{q}  = \sum_{k=1}^{n-1} \frac{1}{n^{p}} k^{p}(n-k)^{p}(n-k)^{q-p} \\
< & \sum_{k=1}^{n-1} \left( \frac{n}{k(n-k)} \right) ^{-p} = \sum_{k=1}^{n-1} \left( \frac{1}{k} + \frac{1}{n-k} \right) ^{-p} \\
= & \sum_{k=1}^{\left[ \frac{n}{2} \right] -1} \left( \frac{1}{k} + \frac{1}{n-k} \right) ^{-p} +\sum_{k=\left[ \frac{n}{2} \right] }^{n-1} \left( \frac{1}{k} + \frac{1}{n-k} \right) ^{-p} \\
< &  2^{-p} \sum_{k=1}^{\left[ \frac{n}{2}\right] -1} \left( \frac{1}{k} \right) ^{-p} + 2^{-p} \sum_{k=\left[ \frac{n}{2}\right]}^{n -1} \left( \frac{1}{n-k} \right) ^{-p} \le 2^{1-p}  \sum_{k=1}^{\left[ \frac{n}{2}\right]} \left( \frac{1}{k} \right) ^{-p},
\end{align*}
then
\begin{equation*}
0 < \lim _{n \rightarrow \infty} \frac{S_{1,p,q}^{(n)}}{n^{p}} < 2^{1-p} \sum_{k=1}^{\infty} \left( \frac{1}{k} \right) ^{-p}.
\end{equation*}
\item When $ p > -1 >q$
\begin{align*}
\frac{S_{1,p,q}^{(n)}}{n ^p} \le & \frac{\sum\limits_{k=1}^{n-1}k^{p}(n-k)^{q}}{n^{p}} \le  \frac{\left(\sum\limits_{k=1}^{n-1}k^{\mu p}\right)^{\frac{1}{\mu}}\cdot \left(\sum\limits_{k=1}^{n-1}(n-k)^{\nu q}\right)^{\frac{1}{\nu}}}{n^{p}} \\ 
\lesssim & \left(\sum_{k=1}^{n-1}\left(\frac{k}{n}\right)^{\mu p}\right)^{\frac{1}{\mu}} \cdot \left(\sum\limits_{k=1}^{n-1}(n-k)^{\nu q}\right)^{\frac{1}{\nu}},
\end{align*}
where $\frac{1}{\mu} + \frac{1}{\nu}=1$ and the H\"older inequality have been used in second line.
Let $\mu \to \infty$, namely corresponding $\nu \to 1$, since the classical limit formula and the rest part is convergent series with \( q < -1 \), thence 
\begin{align*}
\frac{S_{1,p,q}^{(n)}}{n ^p} \lesssim 1.
\end{align*}
\end{enumerate}
\par The first two cases can imitate the above proof. Now, we consider the last one.
$(n^{r} - 1)^{p} \sim n^{rp}$ and $(n^{r}-(n-1)^{r})^{p}(n-1)^{q} \sim n^{(r-1)p+q}$ are terms while $k=1$ and $k=n-1$ respectively. To achieve the whole convergence rate, we just need to find the highest order of this finite sum about $n$. The range of this order about $n$ is from $n^{rp}$ to $n^{(r-1)p+q}$, so the convolution summation is the same order with $n^{\max \{rp, (r-1)p+q\}}$. 	So far, the whole proof is accomplished.
\end{proof}
Here, the Matlab code can be tested that our estimate is optimal,
\begin{lstlisting}[style=Matlab-editor]
N = 16;
q = zeros(N,1);
for j = 1:N
    q(j) = computing(2^j, -1, 2,.3); 
    % close to a constant
end
function q = computing(n,p,q,r)
a = sum((n^r - (1:n-1).^r).^p.*(1:n-1).^q);
if min(p,q) > -1
    b = n^(r*p+q+1);
elseif min(p,q) == -1
    b = n^(r*p+q+1)*(1+log(n));
elseif min(p,q) < -1
    b = n^(max(r*p,(r-1)*p+q));
end
q = a/b;
end
\end{lstlisting}
\section{A proof of necessity}\label{sec:appendix_necessary_condition}
In classical parabolic problems, $\tfrac{1}{2} \partial_t \| u \|^2 = ( \partial_t u, u )$ play a significant role for $L ^{2}$-stability analysis. In time-fractional Caputo cases, A. A. Alikhanov \cite[Lemma 1]{Alikhanov2010} has proved an analog $\frac{1}{2}\partial_{t}^{\alpha} \left\|u^{n}\right\|^{2} \leq \left( \partial_{t}^{\alpha}u^{n}, u^{n} \right)$.
Considering discrete version  with proper discretization,  as we well know, the DC kernels $a _{n-k}^{(n)}$ monotonically increasing with respect to $k$ can lead to the following inequality in $L ^{2}$-stability analysis 
\begin{equation}
\frac{1}{2}\mathrm{D} _{\tau}^{\alpha} \|u ^{n}\| ^{2} \le (\mathrm{D} _{\tau}^{\alpha}u ^{n}, u ^{n}) \label{eq:discrete_l2_stability}
\end{equation}
which includes L1-scheme,  L21$_{\sigma}$-scheme with additional condition \cite{Liao2021} and so on. 
\par In this paper, we will propose rigorous proof the monotonicity of DC kernels is not only a sufficient condition of relation \eqref{eq:discrete_l2_stability} but also a necessary condition. However, some high-order schemes, e.g. L1$^{+}$ or Caputo BDF-k do not have a relation \eqref{eq:discrete_l2_stability}, which is the main difficulty of stability analysis.
The discrete $L ^{2}$-norm estimation is essentially a series of inequalities established by quadratic form. The proof in this section also starts from this perspective, establishing the relationship between the positive inertia index of the representation matrix of the quadratic form and the uniform monotonicity.
\begin{lemma}
\label{lem:eigenvalues_analysis}
For arbitrary positive real sequence $\{d_{i}\}_{i=1}^{n}$, define the real symmetrical matrix value function $\mathbf{M}(\mathbf{d}_{n}):=\mathbf{L}_{n}\mathbf{d}_{n} +  \mathbf{d}_{n} \mathbf{L}_{n}^{\mathrm{T}} - \mathbf{d}_{n}$, whose $\mathbf{d}_{n}=\operatorname{diag}(d_{1}, d_{2}, \ldots, d_{n})$  and $\mathbf{L}_{n}$ is the all-one unite lower triangular matrix,
then the matrix representation of quadratic form  $\mathbf{M}(\mathbf{d}_{n})$ is positive definite if only if  $\{d_{i}\}_{i=1}^{n}$ must be uniformly strictly monotonically increasing.
\end{lemma}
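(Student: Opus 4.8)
The plan is to first reduce $\mathbf{M}(\mathbf{d}_n)$ to a single recognizable matrix by computing its entries. A direct multiplication gives $(\mathbf{L}_n\mathbf{d}_n)_{ij}=d_j$ when $i\ge j$ and $0$ otherwise, while $(\mathbf{d}_n\mathbf{L}_n^{\mathrm{T}})_{ij}=d_i$ when $i\le j$ and $0$ otherwise; the role of the $-\mathbf{d}_n$ term is precisely to cancel the diagonal that gets counted in both triangular pieces. Hence
\begin{equation*}
\mathbf{M}(\mathbf{d}_n)_{ij}=d_{\min\{i,j\}},
\end{equation*}
i.e. $\mathbf{M}(\mathbf{d}_n)$ is the symmetric ``min-matrix'' built from the sequence $\{d_i\}$.

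Next I would diagonalize this matrix by an explicit congruence. Setting $\mathbf{E}_n:=\operatorname{diag}(d_1,\,d_2-d_1,\,d_3-d_2,\,\ldots,\,d_n-d_{n-1})$, a telescoping sum shows
\begin{equation*}
\bigl(\mathbf{L}_n\mathbf{E}_n\mathbf{L}_n^{\mathrm{T}}\bigr)_{ij}=\sum_{k\le \min\{i,j\}}(\mathbf{E}_n)_{kk}=d_{\min\{i,j\}},
\end{equation*}
so that $\mathbf{M}(\mathbf{d}_n)=\mathbf{L}_n\mathbf{E}_n\mathbf{L}_n^{\mathrm{T}}$. Because $\mathbf{L}_n$ is unit lower triangular, it is invertible, and therefore $\mathbf{M}(\mathbf{d}_n)$ and $\mathbf{E}_n$ are congruent matrices representing the same quadratic form up to an invertible change of variables.

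Finally I would invoke Sylvester's law of inertia: $\mathbf{M}(\mathbf{d}_n)$ is positive definite if and only if $\mathbf{E}_n$ is, and since $\mathbf{E}_n$ is diagonal this holds exactly when $d_1>0$ and $d_k-d_{k-1}>0$ for every $2\le k\le n$. As $\{d_i\}_{i=1}^n$ is assumed positive, the condition $d_1>0$ is automatic, so positive definiteness is equivalent to $d_1<d_2<\cdots<d_n$, i.e. to $\{d_i\}$ being uniformly strictly monotonically increasing. There is no deep obstacle here; the only point requiring a little foresight is guessing the correct diagonal $\mathbf{E}_n$ of successive differences that realizes the $\mathbf{L}\mathbf{E}\mathbf{L}^{\mathrm{T}}$ factorization, after which the inertia argument closes the proof. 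If one prefers to avoid quoting Sylvester's law, the same conclusion follows from the Hurwitz criterion, since the leading principal minors of $\mathbf{M}(\mathbf{d}_n)$ telescope to $d_1\prod_{k=2}^{m}(d_k-d_{k-1})$ for $1\le m\le n$.
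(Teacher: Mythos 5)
Your proof is correct, and it takes a genuinely different and more economical route than the paper. You identify $\mathbf{M}(\mathbf{d}_n)$ entrywise as the min-matrix $\left(d_{\min\{i,j\}}\right)_{i,j}$, exhibit the explicit congruence $\mathbf{M}(\mathbf{d}_n)=\mathbf{L}_n\mathbf{E}_n\mathbf{L}_n^{\mathrm{T}}$ with $\mathbf{E}_n=\operatorname{diag}(d_1,d_2-d_1,\ldots,d_n-d_{n-1})$, and conclude by Sylvester's law of inertia (or, equivalently, by the leading-principal-minor criterion, since those minors are exactly $d_1\prod_{k=2}^{m}(d_k-d_{k-1})$). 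The paper instead computes the characteristic polynomial $p(\lambda;\mathbf{d}_n)$ by a sequence of determinant manipulations and a combinatorial bookkeeping of "multi-step" products of the differences $\delta_{j}^{k}=d_k-d_j$, matches the resulting coefficients against the elementary symmetric functions of the eigenvalues, and reads the conclusion off the constant term $\prod_k\lambda_k=\prod_k(d_k-d_{k-1})$. Your argument is shorter, needs no eigenvalue or combinatorial machinery, and delivers both implications cleanly in one stroke, since congruence by the invertible $\mathbf{L}_n$ preserves positive definiteness exactly; it also makes transparent why the successive differences are the relevant quantities. The paper's computation yields more detailed information (all coefficients of the characteristic polynomial expressed as chain-sums in the $\delta$'s), but its final passage from the single product identity to the statement "$\lambda_k>0\Leftrightarrow d_k>d_{k-1}$ for each $k$" requires the full coefficient comparison to be airtight, whereas your inertia argument avoids that subtlety altogether. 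One cosmetic remark: the paper implicitly sets $d_0=0$, so its "uniformly strictly monotonically increasing" already encodes $d_1>0$; you obtain the same from the standing positivity assumption on the sequence, so the two statements agree.
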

\begin{proof}
Denote characteristic polynomial $p(\lambda; \mathbf{d}_{n}) := \left|\mathbf{M}(\mathbf{d}_{n}) - \lambda \mathbf{I}_{n} \right| $ and check its expression
\begin{align*}
p(\lambda; \mathbf{d}_{n})  = 
\begin{vmatrix}
d_{1}-\lambda & d_{1} & \ldots & d_{1} \\
d_{1} & d_{2}-\lambda & \ldots & d_{2} \\
\vdots & \vdots & \ddots & \vdots \\
d_{1} & d_{2} & \ldots & d_{n}-\lambda \\
\end{vmatrix}
\xlongequal{\textbf{step 1} } 
\begin{vmatrix}
1 & 0 & \ldots & 0  \\ 
-d_{1} &  d_{1}-\lambda & \ldots & d_{1} \\
\vdots &  \vdots & \ddots & \vdots \\
-d_{n} & d_{1} & \ldots & d_{n}-\lambda \\
\end{vmatrix} \\
\xlongequal{\textbf{step 2} }
\begin{vmatrix}
1 & 1 & \ldots & 1 \\ 
-\delta_{0}^{1} & -\lambda & \ldots & 0 \\
\vdots &  \vdots & \ddots & \vdots \\
-\delta_{0}^{n} & -\delta ^{n}_{1} & \ldots & -\lambda \\
\end{vmatrix}
\xlongequal{\textbf{step 3}}
\begin{vmatrix}
x _{0}^{(n)} & 0 & \ldots & 0 \\ 
-\delta_{0}^{1} & -\lambda & \ldots & 0 \\
\vdots &  \vdots & \ddots & \vdots \\
-\delta_{0}^{n} & -\delta ^{n}_{1} & \ldots & -\lambda \\
\end{vmatrix} = (-\lambda)^{n}x_{0}^{(n)},
\end{align*}
where $\delta ^{k}_{j}:=d_{k}-d_{j}$ for all $ 0 \le j < k \le n $ with letting $d _0=0$.
\begin{itemize}
\item[\textbf{step 1}]
Add one row and one column to the determinant and ensure that the values of the determinant do not change, and mark the added rows and columns as the zeroth row and zeroth column for easy description in the next;
\item[\textbf{step 2}]
Add the zeroth column of the determinant to each subsequent column and denote $d_k=\delta_{0}^{k}$ for all $1\le k \le n$ as a unified form;
\item[\textbf{step 3}]
Retain the first element of the zeroth row in the determinant and reduce the remaining $1$ to $0$ through the following process. Firstly, by multiplying the elements in the $n$th row by $1/ \lambda $ and adding them to the zeroth row, the elements in the $n$th column of the zeroth row are reduced to zero. At the same time, the $k$th element of the zeroth row is reduced to $1-  \tfrac{1}{\lambda}\delta_ {k} ^ {n}, \forall~0  \le k  \le n $. This step is completed through a similar process for subsequent rows.
\end{itemize}
In order to provide a more detailed description of  \textbf {step 3}, which is the process of changing the elements in the zeroth row of the determinant, we record $x_ {k} ^ {(n)} $ as the $k$th element in the zeroth row, and the superscript represents the $n$th change, such as $x_ {k} ^ {(0)}  \equiv 1,  \forall~0  \le k \le n $ indicates that each element in the zeroth row in the initial state is one. So, the entire process is described as follows,
\begin{align*}
\begin{array}{ll}
\text{For all } 0\le k \le n-0, & x_{k}^{(0)} = 1; \\  
\text{For all } 0\le k \le n-1, & x_{k}^{(1)} = x_{k}^{(0)} - \tfrac{1}{\lambda}\delta ^{n}_{k}x_{n-0}^{(0)};\\
~~~~~~~~~~~~~~~~ \vdots & ~~~~~~~ \vdots \\
\text{For all } 0\le k \le 0, & x_{k}^{(n)}   = x_{k}^{(n-1)} - \tfrac{1}{\lambda}\delta ^{1}_{k}x_{1}^{(n-1)};\\
\end{array}
\end{align*}
In fact, each step in the above process contains this linear transformation, and the coincidence of multiple linear transformations can be represented by corresponding matrix multiplication. So this process is represented as a matrix form,
\begin{equation}
x_{0}^{(n)} = \left(\prod _{k=1}^{n}[\mathbf{I}_{k}, \mathbf{v}_{k}]\right)\mathbf{e}_{n+1}, \label{eq:x0_compact_form}
\end{equation}
where $\left[\mathbf{I}_{k}, \mathbf{v}_{k}\right]$ represent  $k\times k$ identity matrix stitching a  $k$-dimensional column vector $\mathbf{v}_{k}:=-\frac{1}{\lambda}\left[\delta ^{k}_{0}, \delta_{1}^{k} \ldots, \delta ^{k}_{k-1}\right]^{\mathrm{T}}$, e.g.
\[
\left[\mathbf{I}_{k}, \mathbf{v}_{k}\right] = 
\begin{bmatrix}
    1 & 0 & \ldots & 0 & -\frac{1}{\lambda}\delta ^{k}_{0} \\
    0 & 1 & \ldots & 0 & -\frac{1}{\lambda}\delta ^{k}_{1} \\
    \vdots & \vdots & \ddots & \vdots & \vdots  \\
    0 & 0 & \ldots & 1 & -\frac{1}{\lambda}\delta ^{k}_{k-1} \\
\end{bmatrix}_{k\times(k+1)}.
\]
and $\mathbf{e}_{n+1}$ is $(n+1)$-dimensional all-one column vector.
\par Next, we will calculate the specific expression of $(-\lambda)^{n}\left(\prod _{k=1}^{n}[\mathbf{I}_{k}, \mathbf{v}_{k}]\right)\mathbf{e}_{n+1}$. Denote $\gamma _{k-i}^{k} := - \tfrac{1}{\lambda}\delta _{k-i}^{k}$ for all $1\le i \le k$ and observe its non-compact form 
\begin{equation*}
[\mathbf{I}_{1}, \mathbf{v}_{1}] [\mathbf{I}_{2}, \mathbf{v}_{2}] \ldots [\mathbf{I}_{n}, \mathbf{v}_{n}] \mathbf{e}_{n+1}.
\end{equation*}
From right to left is a matrix representation of the change process of the elements in the zeroth row of the determinant in  \textbf {step 3}, whose dimensions gradually decrease and eventually change to a one-dimensional number. From left to right will be our next calculation process, where the dimension of the row vector is gradually increased to $n+1$-dimension, but ultimately multiplying it with the column vector is equivalent to summing the $n+1$ components of the row vector, which also yields a one-dimensional number.
\par At this point, our problem transforms into determining each element in the row vector $ \prod_ {k=1} ^ {n} [\mathbf{I}_{k}, \mathbf {v}_ {k}]$. Here, we will approach this computational problem from the perspective of combinatorial mathematics. To this end, we first explain the meaning of the following symbols in combinatorial mathematics
\begin{align*}
\gamma _{i _{j-1}}^{i _{j}}: & \text{ a single step  from } i _{j-1} \text{ to } i _{j}, \\
\prod _{j=0}^{k-1} \gamma _{i _{j}} ^{i _{j+1}}: & \text{ a multi-step from } i _{0} \text{ to } i _{k} \text{ via } k\text{ steps}.
\end{align*}
Particularly, $ \gamma _{0}^{0} = 1$ means the position has no movement.
\par Next, we use mathematical induction to prove that $\forall~1  \le i  \le n+1 $ has
\begin{equation}
\left\langle \prod _{k=1}^{n} [\mathbf{I}_{k}, \mathbf{v}_{k}] \right\rangle _{i} = \sum _{k=1}^{n+1-i}\prod _{j=0}^{k-1}\gamma _{i _{j}}^{i _{j+1}}, \label{eq:vector_element}
\end{equation}
where $ 0=i_ {0}<i_ {1} <i_ {2} < \ldots <i_ {k} =n+1-i $ and $ \langle  \cdot  \rangle_ {i} $ represents the $i$-th component in the vector. In the above equation, the specific meaning of its abstract expression is the total sum of all possibilities which from $i_ {0}=0 $ departs, can only move in the direction of point $n+1-i $ each time, but does not limit the forward step size, and ultimately reaches point $n+1-i$.
\par When $n=1$, $[1, \gamma_{0}^{1}]=[\mathbf{I}_{1}, \mathbf{v}_{1}]$, obviously holds. Assume \eqref{eq:vector_element} holds for $n$, and now, we will prove $n+1$ also holds. 
\begin{align*}
& \left(\prod _{k=1}^{n} [\mathbf{I}_{k}, \mathbf{v}_{k}] \right)\cdot [\mathbf{I}_{n+1}, \mathbf{v}_{n+1}] = \left[\prod _{k=1}^{n} [\mathbf{I}_{k}, \mathbf{v}_{k}], \left(\prod _{k=1}^{n} [\mathbf{I}_{k}, \mathbf{v}_{k}] \right)\mathbf{v}_{n+1} \right] \\
= & \left[\prod _{k=1}^{n} [\mathbf{v}_{k}, \mathbf{I}_{k} ], \sum_{i=1}^{n+1}\left(\sum _{k=1}^{n+1-i}\prod _{j=0}^{k-1}\gamma _{i _{j}}^{i _{j+1}}\right)\gamma_{n+1-i}^{n+1}\right]  = \left[\prod _{k=1}^{n} [\mathbf{v}_{k}, \mathbf{I}_{k} ], \sum _{k=1}^{n+1}\prod _{j=0}^{k-1}\gamma _{i _{j}}^{i _{j+1}}\right],
\end{align*}
where the calculation of the last step can also start from the meaning of combination, for all $1  \le i  \le n+1 $,
\begin{align*}
\left(\sum _{k=1}^{n+1-i}\prod _{j=0}^{k-1}\gamma _{i _{j}}^{i _{j+1}}\right)\gamma_{n+1-i}^{n+1},
\end{align*}
represent all possibilities from $i _{0}=0$ depart, reach point $n+1-i $, and continue moving forward to reach point $n+1 $, thus consequently $ \sum_ {i=1} ^ {n+1} $ operation means that taking the sum of all the possibilities of starting from $ i_0=0 $ and reaching point $n+1 $, which can be used to directly write the final result based on this meaning.
\par By the above analysis, we know that 
\begin{equation}
x_0^{(n)}=\sum_{i=1}^{n+1}\sum _{k=1}^{n+1-i}\prod _{j=0}^{k-1}\gamma _{i _{j}}^{i _{j+1}}, \label{eq:x0_final}
\end{equation}
those value is represented as from $i_ 0=0 $ depart and take $0  \le k  \le n $ steps to reach all possible sums up to point $n$. Based on $ \gamma_ {k-i} ^ {k}:=-  \frac{1}{\lambda}\delta_ {k-i} ^ {k} $ is the $-1$ degree of $-  \lambda $, then $ \prod_ {j=0} ^ {k-1}  \gamma_ {i_ {j}} ^ {i_ {j+1}} $ is represented as the $- k $ degree of $-  \lambda $. All $- k $ degrees in the expression \eqref{eq:x0_final} are represented as from $i_ 0=0 $ depart, after $k $ steps, and reach all possibilities up to point $n $, i.e
\begin{equation*}
    \sum\limits_{1\le i_{1} < \ldots < i_{k} \le n}\prod\limits_{j=0}^{k-1}\gamma_{i_{j}}^{i_{j+1}}.
\end{equation*}
Then 
\begin{align*}
p(\lambda; \mathbf{d}_{n}) = (-\lambda)^{n}\sum _{k=1}^{n}\left(\sum\limits_{1\le i_{1} < \ldots < i_{k} \le n}\prod\limits_{j=1}^{k}\gamma_{i_{j-1}}^{i_{j}}\right) = \sum _{i=0}^{n} C _{i}(- \lambda)^{i},
\end{align*}
where $ C_ {i} $ represents both the coefficient of the degree i of $-  \lambda $ and from $i_0=0 $ depart at, take $n-i $ steps, and reach all possibilities up to point $n$, namely $ C_ {i} $ have specific expression
\begin{align*}
\begin{bmatrix}
C _{0}\\
\vdots \\
C _{i}\\
\vdots \\
C _{n-2}\\
C _{n-1}\\
C _{n}
\end{bmatrix} 
=
\begin{bmatrix}
\prod\limits_{k=1}^{n} \delta ^{k}_{k-1} \\
\vdots \\
\sum\limits_{1\le k_{1} < \ldots < k_{i} \le n}\prod\limits_{j=1}^{i}\delta_{k_{j-1}}^{k_{j}} \\
\vdots \\
\sum\limits_{1 \le k_{1} < k_{2} \le n} \delta_{k_{1}}^{k_{2}} \delta_{0}^{k_{1}}   \\
\sum\limits_{1\le k\le n}\delta_{0}^{k} \\
1
\end{bmatrix} 
\xlongequal{\star}
\begin{bmatrix}
\prod\limits_{k=1}^{n} \lambda_{k} \\
\vdots \\
\sum\limits_{1\le k_{1} < \ldots < k_{i} \le n}\prod\limits_{j=1}^{i} \lambda_{k_{j}}\\
\vdots \\
\sum\limits_{1\le k_{1} < k_{2} \le n}\lambda_{k_{1}}\lambda_{k_{2}} \\
\sum\limits_{1\le k \le n} \lambda_{k} \\
1
\end{bmatrix}
\end{align*}
According to all eigenvalues of a real symmetric matrix are real numbers and assume the $k$th real eigenvalue is $ \lambda_ {k} $, then its characteristic polynomial can be also represented as $C \prod_ {k=1} ^ {n} ( \lambda_ {k}- \lambda)=p ( \lambda;  \mathbf {d}_ {n} ) $ where $C\neq 0$. In this problem, by comparing the coefficient of highest-order term $(-\lambda) ^{n}$, it can be determined that $C=1$. For more coefficient comparisons, we can refer to the identity ($ \star $).
\par Therefore, by comparing the coefficient of zeroth polynomial term, we can conclude that
\begin{equation*}
\prod\limits_{k=1}^{n} \lambda_{k} \equiv \prod\limits_{k=1}^{n}(d _{k} - d _{k-1}), \text{ for all } 1 \le k \le n,
\end{equation*}
namely, for all $1\le k \le n$ have $\lambda  _{k}>0 \Leftrightarrow d _{k} > d _{k-1}$. This completes the proof.
\end{proof}
\begin{theorem}
For all $1 \le n \le N$,
\begin{equation*}
\frac{1}{2}\sum_{k=1}^{n}a_{n-k}^{(n)}\nabla_{\tau} \left\|u^{k}\right\|^{2} \le \left( \sum_{k=1}^{n}a_{n-k}^{(n)}\nabla_{\tau} u^{k}, u^{n} \right),
\end{equation*}
if and only if the DC kernels $a _{n-k}^{(n)}$ is monotonically increasing  with respect to index $k$.
\end{theorem}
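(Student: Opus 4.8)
The plan is to fix $n$, recast the asserted inequality as the nonnegativity of a quadratic form in the increments $g^{k}:=\nabla_{\tau}u^{k}=u^{k}-u^{k-1}$, and then appeal to Lemma~\ref{lem:eigenvalues_analysis}. Using $\nabla_{\tau}\|u^{k}\|^{2}=(g^{k},u^{k}+u^{k-1})=2(g^{k},u^{k})-\|g^{k}\|^{2}$ together with $u^{n}-u^{k}=\sum_{j=k+1}^{n}g^{j}$, a short Abel-type rearrangement gives the exact identity
\[
\big(\mathrm{D}_{\tau}^{\alpha}u^{n},\,u^{n}\big)-\tfrac12\,\mathrm{D}_{\tau}^{\alpha}\|u^{n}\|^{2}
=\tfrac12\sum_{k=1}^{n}a_{n-k}^{(n)}\|g^{k}\|^{2}+\sum_{1\le k<j\le n}a_{n-k}^{(n)}(g^{k},g^{j}),
\]
so the theorem (at level $n$) is equivalent to the right-hand side being $\ge 0$ for every sequence $\{u^{k}\}_{k=0}^{n}$.

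To connect this with Lemma~\ref{lem:eigenvalues_analysis}, put $d_{k}:=a_{n-k}^{(n)}$, a positive sequence by the integral representation of the DC kernels, and rewrite the right-hand side as $\tfrac12\sum_{k,j}d_{\min(k,j)}(g^{k},g^{j})$. A one-line computation shows that the matrix $\mathbf{M}(\mathbf{d}_{n})=\mathbf{L}_{n}\mathbf{d}_{n}+\mathbf{d}_{n}\mathbf{L}_{n}^{\mathrm{T}}-\mathbf{d}_{n}$ has $(k,j)$-entry exactly $d_{\min(k,j)}$. Since $(u^{0},\dots,u^{n})\mapsto(u^{0},g^{1},\dots,g^{n})$ is a linear bijection and the form does not involve $u^{0}$, the inequality holds for all $\{u^{k}\}$ if and only if $\mathbf{M}(\mathbf{d}_{n})$ is positive semidefinite; by Lemma~\ref{lem:eigenvalues_analysis} this holds exactly when $d_{1}\le\cdots\le d_{n}$, i.e.\ when $a_{0}^{(n)}\ge a_{1}^{(n)}\ge\cdots\ge a_{n-1}^{(n)}$. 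Quantifying over $1\le n\le N$ then yields the theorem.

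The only point requiring extra care is the passage from the \emph{definite} form of Lemma~\ref{lem:eigenvalues_analysis} (equivalent to \emph{strict} monotonicity) to the non-strict statement above. For the ``$\Leftarrow$'' direction I would obtain positive semidefiniteness directly from the rank-one decomposition $\mathbf{M}(\mathbf{d}_{n})=\sum_{m=1}^{n}(d_{m}-d_{m-1})\,\mathbf{1}_{m}\mathbf{1}_{m}^{\mathrm{T}}$, where $d_{0}:=0$ and $\mathbf{1}_{m}$ is the vector with ones in positions $m,\dots,n$ and zeros elsewhere, each summand being a nonnegative multiple of a rank-one projection. For ``$\Rightarrow$'', a strict descent $d_{k_{0}}>d_{k_{0}+1}$ forces the $2\times2$ principal minor of $\mathbf{M}(\mathbf{d}_{n})$ on $\{k_{0},k_{0}+1\}$ to equal $d_{k_{0}}(d_{k_{0}+1}-d_{k_{0}})<0$, contradicting semidefiniteness; alternatively, both directions can be read off the coefficient identity $(\star)$ already established inside the proof of Lemma~\ref{lem:eigenvalues_analysis}. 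I expect this equality-case bookkeeping, not the quadratic-form reduction itself, to be the main obstacle.
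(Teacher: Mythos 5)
Your reduction is exactly the paper's: the same Abel-type rearrangement turns $\bigl(\mathrm{D}_{\tau}^{\alpha}u^{n},u^{n}\bigr)-\tfrac12\,\mathrm{D}_{\tau}^{\alpha}\|u^{n}\|^{2}$ into the quadratic form $\tfrac12\,\mathbf{v}_{n}^{\mathrm{T}}\bigl(\mathbf{L}_{n}\mathbf{A}_{\mathrm{diag}}+\mathbf{A}_{\mathrm{diag}}\mathbf{L}_{n}^{\mathrm{T}}-\mathbf{A}_{\mathrm{diag}}\bigr)\mathbf{v}_{n}$ with $\mathbf{v}_{n}=[\nabla_{\tau}u^{j}]_{j=1}^{n}$, and both arguments then read off the answer from the monotonicity characterization of that matrix, so the route is essentially the paper's. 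Where you genuinely differ is the final step: the paper simply quotes Lemma~\ref{lem:eigenvalues_analysis}, which as stated equates positive \emph{definiteness} with \emph{strict} increase of $d_{j}=a_{n-j}^{(n)}$, whereas the theorem's non-strict inequality really needs positive \emph{semidefiniteness} versus non-decreasing kernels; you close this mismatch explicitly, proving sufficiency from the rank-one decomposition $\mathbf{M}(\mathbf{d}_{n})=\sum_{m=1}^{n}(d_{m}-d_{m-1})\mathbf{1}_{m}\mathbf{1}_{m}^{\mathrm{T}}$ (which moreover works verbatim for Hilbert-space-valued increments, giving $\sum_{m}(d_{m}-d_{m-1})\bigl\|\sum_{k\ge m}\nabla_{\tau}u^{k}\bigr\|^{2}\ge 0$) and necessity from the negative $2\times2$ principal minor $d_{k_{0}}(d_{k_{0}+1}-d_{k_{0}})<0$. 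This is more elementary than the paper's combinatorial characteristic-polynomial computation and is arguably the cleaner way to obtain the semidefinite statement the theorem actually asserts; the only detail worth spelling out in the necessity direction is that you should take all $u^{k}$ to be scalar multiples of one fixed unit element, so that the Hilbert-space form collapses to the scalar quadratic form to which the minor argument applies.
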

\begin{proof}
\begin{align*}
& \left( \sum_{i=1}^{n} a_{n-i}^{(n)}\nabla_{\tau} u^{i}, u^{n} \right) - \frac{1}{2}\sum_{i=1}^{n} a_{n-i}^{(n)}\nabla_{\tau} \left\|u^{i}\right\|^{2} 
= \sum_{i=1}^{n} a_{n-i}^{(n)}\left( \nabla_{\tau} u^{i}, u^{n} - u^{i-\frac{1}{2}}\right) \\
= & \sum_{i=1}^{n} a_{n-i}^{(n)}\left( \nabla_{\tau} u^{i}, u^{n} - u^{i-1}\right)  + \sum_{i=1}^{n} a_{n-i}^{(n)}\left( \nabla_{\tau} u^{i}, u^{i-1} -  u^{i-\frac{1}{2}}\right) \\
= & \sum_{i=1}^{n} a_{n-i}^{(n)}\left( \nabla_{\tau} u^{i}, \sum_{j=i}^{n} \nabla_{\tau} u^{j} \right) -  \frac{1}{2}\sum_{i=1}^{n} a_{n-i}^{(n)}\left\| \nabla_{\tau} u^{i}\right\|^{2} \\
= & \sum_{j=1}^{n} \left(\sum_{i=1}^{j}  a_{n-i}^{(n)} \nabla_{\tau} u^{i}, \nabla_{\tau} u^{j}\right) -  \frac{1}{2}\sum_{i=1}^{n} a_{n-i}^{(n)}\left\| \nabla_{\tau} u^{i}\right\|^{2}\\
= & \mathbf{v}_{n}^{\mathrm{T}}\left(\mathbf{L}_{n}\mathbf{A}_{\mathrm{diag}}\right)\mathbf{v}_{n} - \frac{1}{2}\mathbf{v}_{n}^{\mathrm{T}}\mathbf{A}_{\mathrm{diag}}\mathbf{v}_{n} 
 = \frac{1}{2} \mathbf{v}_{n}^{\mathrm{T}}\left(\mathbf{L}_{n}\mathbf{A}_{\mathrm{diag}} + \mathbf{A}_{\mathrm{diag}}\mathbf{L}_{n}^{\mathrm{T}} - \mathbf{A}_{\mathrm{diag}}\right)\mathbf{v}_{n}, 
\end{align*}
where diagonal matrix $\mathbf{A}_{\mathrm{diag}} := \operatorname{diag}([a_{n-j}^{(n)}]_{j=1}^{n},)$ and column vector $\mathbf{v}_{n} := [\nabla_{\tau}u^{j}]_{j=1}^{n}$.
According to the lemma \ref{lem:eigenvalues_analysis}, $\mathbf{M}(\mathbf{A}_{\mathrm{diag}})$ is a matrix representation of the positive definite quadratic form based on the monotonicity of $a _{n-k}^{(n)}$. Therefore, for arbitrary real vector $\mathbf{v}_{n}$, the sufficient and essential condition to ensure  $\mathbf{v}_{n}^{\mathrm{T}}\mathbf{M}(\mathbf{A}_{\mathrm{diag}})\mathbf{v}_{n}\ge 0$ is that $a _{n-k}^{(n)}$ should be monotonically increasing for all $1\le k\le n$. The assertion have been finished.
\end{proof}

\newpage

%
%
\begin{acknowledgements}
The work is supported by the National Natural Science Foundation of China (12171385) and the Fundamental Research Funds for the Central Universities, PR China (No. xzy022023024).
\end{acknowledgements}
\section*{Conflict of interest}
The authors declared that they have no conflicts of interest to this work.

\bibliographystyle{spmpsci}      
\bibliography{FC.bib}   


\end{document}